\documentclass[11pt]{article}

\usepackage[a4paper,margin=1in]{geometry}
\usepackage{amsmath,amssymb,amsfonts,amsthm,mathtools}
\usepackage{algorithm}
\usepackage{algpseudocode}
\usepackage{enumitem}
\usepackage{hyperref}
\usepackage[nameinlink,capitalize]{cleveref}
\hypersetup{colorlinks=true,linkcolor=blue,citecolor=blue,urlcolor=blue}
\usepackage[numbers,sort&compress]{natbib}

\newcommand{\R}{\mathbb R}
\newcommand{\ip}[2]{\left\langle #1,#2\right\rangle}
\newcommand{\norm}[1]{\left\|#1\right\|}
\newcommand{\cL}{\mathcal L}
\newcommand{\cG}{\mathcal G}
\newcommand{\cE}{\mathcal E}
\newcommand{\cS}{\mathcal S}

\newcommand{\bigO}{\mathcal O}

\theoremstyle{plain}
\newtheorem{theorem}{Theorem}[section]
\newtheorem{lemma}[theorem]{Lemma}
\newtheorem{proposition}[theorem]{Proposition}

\theoremstyle{definition}
\newtheorem{assumption}[theorem]{Assumption}
\newtheorem{remark}[theorem]{Remark}

\numberwithin{equation}{section}

\title{Fast primal-dual methods for convex-concave bilinear saddle point problems: continuous-time dynamics and discrete algorithms}
\author{Xin He\thanks{School of Science, Xihua University, Chengdu, China. 
Email: \texttt{hexinuser@163.com}.}, Ya-Ping Fang\thanks{Department of Mathematics, Sichuan University, Chengdu, China
Email: \texttt{ypfang@scu.edu.cn}}}
\date{}

\begin{document}
\maketitle

\begin{abstract}
This paper studies Nesterov accelerated methods for continuously differentiable convex-concave bilinear saddle point problems. For the continuous-time model, we analyze a second-order primal-dual dynamical system with vanishing damping $\alpha/t$, where $\alpha\geq 3$. Under the merely convex-concave setting, we prove convergence of the primal-dual trajectory to a saddle point. In the noncritical regime $\alpha>3$, we further obtain the improved rate $o(1/t^{2})$ for the primal-dual gap and $o(1/t)$ for the velocity, and, under an additional Lipschitz gradient assumption, $o(1/t)$ for the stationarity residual. We then derive a structure-preserving finite-difference discretization, which leads to a fast primal-dual algorithm with Nesterov extrapolation. For a general accelerated parameter sequence ${t_k}$ satisfying $t_{k+1}^2-t_k^2\le \rho t_{k+1}$ with $\rho\in(0,1]$, we prove the $\bigO(1/t_k^{2})$  convergence rate for the primal-dual gap and convergence of the  generated sequence. In the noncritical case $\rho<1$, we further establish the improved rate $o(1/t_k^{2})$ for the gap and $o(1/t_k)$ for the stationarity residual. These results provide continuous-discrete acceleration methods for bilinear saddle point problems in the merely convex-concave setting.
\end{abstract}
 
\noindent{\bf Keywords.} Bilinear saddle point problems, Inertial accelerated dynamic, fast primal-dual algorithm, convergence properties,  Nesterov acceleration

\medskip

  \section{Introduction} 
 
Bilinear convex-concave saddle point problems arise naturally in variational inequalities, imaging, signal processing, machine learning, and constrained convex programming; see, for example, \cite{ChambolleJMIV,DuICML,BotCOAP,Wangips,ChambolleMP}. In this paper, we consider 
\begin{equation}\label{prob:saddle} 
   \min_{x\in\mathbb R^n}\max_{y\in\mathbb R^m} \ \mathcal L(x,y):=f(x)+\langle Kx,y\rangle-g(y), 
\end{equation}
where $f:\mathbb R^n\to\mathbb R$ and $g:\mathbb R^m\to\mathbb R$ are convex and continuously differentiable functions, and $K\in\mathbb R^{m\times n}$ is a given matrix. This model is also closely related to the composite convex minimization problem 
\[ \min_{x\in\mathbb R^n}\ f(x)+g^*(Kx),\]
where $g^*$ denotes the Fenchel conjugate of $g$. Hence, \eqref{prob:saddle} provides a unified formulation for many structured convex optimization problems with a linear coupling.

 First-order primal-dual methods are among the most important tools for solving \eqref{prob:saddle}. The primal-dual algorithms of Chambolle and Pock \cite{ChambolleJMIV} and related splitting methods provide efficient frameworks for convex-concave saddle point problems with linear coupling. Extragradient, mirror-prox, forward-backward-forward, optimistic, and reflected-gradient methods use correction or prediction steps to control the skew-symmetric interaction generated by the bilinear term; see, for example, \cite{KorpelevichEMM,NemirovskiSIOPT,TsengSICON,PopovMN,MalitskySIOPT20,MokhtariSIOPT}. In the general convex-concave setting, these methods typically provide $\bigO(1/k)$ convergence guarantees for primal-dual gaps. Faster rates can be obtained under stronger assumptions, such as strong convexity, partial strong convexity, or special metric regularity. Accelerated primal-dual methods for strongly convex-concave and strongly convex-strongly concave problems have been developed in \cite{KhalafiPMLR,BotCOAP,Tran20,ChangJSC,Tran22,HeCNSNS,HeStrong,CondatPre2026}. However, in the merely convex-concave bilinear setting of \eqref{prob:saddle}, it is still difficult to obtain accelerated rates together with last-iterate convergence and improved asymptotic estimates. This difficulty is caused by the simultaneous presence of Nesterov-type inertia and the skew-symmetric primal-dual coupling.

 A continuous-time dynamic viewpoint offers a useful way to understand the mechanism of acceleration. For unconstrained convex minimization, Su et al. \cite{SuJMLR} showed that Nesterov's accelerated gradient method \cite{Nesterov18,Nesterov83} is closely related to the second-order vanishing damping system 
\begin{equation}\label{eq:avd} 
	\ddot x(t)+\frac{\alpha}{t}\dot x(t)+\nabla f(x(t))=0. 
\end{equation} 
When $\alpha\ge3$, this dynamic yields the accelerated rate $f(x(t))-\min f=\bigO(1/t^{2})$. When $\alpha>3$, trajectory convergence and the improved estimate $f(x(t))-\min f=o(1/t^{2})$ have been established in \cite{AttouchMp,MayTJM}. The convergence of the trajectory of \eqref{eq:avd} in the critical case $\alpha=3$ has been investigated in \cite{Jang} by an AI-assisted approach. Based on time discretizations of \eqref{eq:avd} and corresponding Lyapunov functions, the convergence of iterates and convergence rates of Nesterov-type accelerated algorithms have been studied in \cite{WibisonoPans,AttouchSIOPT,Attouch18siam,Jang,Bot25}. This continuous-time viewpoint has also been extended to primal-dual frameworks for linearly constrained convex optimization. Continuous-time primal-dual dynamics with vanishing damping $\alpha/t$ have been studied in \cite{ZhaoJMLR,HeNN,HeTc,ZengTAC,BotJDE,HeSIAM}, while Nesterov-type accelerated primal-dual algorithms have been developed in \cite{HeNA,LuoMC,HePDarxiv,BotMP,LuoJoGo}. These works show that Lyapunov analysis for continuous-time dynamics is not only a tool for deriving fast rates, but also provides useful guidance for designing discrete accelerated algorithms with matching convergence properties.

Extending the Nesterov's accelerated dynamic \eqref{eq:avd} from unconstrained optimization to saddle point problems is nontrivial. A direct insertion of the skew-symmetric coupling into the inertial system does not automatically produce a useful Lyapunov estimate, because the rotational component generated by the bilinear term must be controlled together with the damping and extrapolation terms. For bilinear saddle point problems, Zeng et al. \cite{ZengIFAC} proposed a Nesterov's accelerated continuous-time dynamic with correction extrapolation terms and obtained an $\bigO(1/t^{2})$ convergence rate. He et al. \cite{HeAMO} studied a more general second-order primal-dual dynamical system with general damping and scaling parameters, and established convergence rates for primal-dual gaps and velocities. Ding et al. \cite{DingCOAP} proposed an accelerated primal-dual algorithm for bilinearly coupled saddle point problems based on a discretization of a continuous dynamic, and proved $\bigO(1/k^2)$ primal-dual gap estimates together with convergence of the generated iterates under convex-concave assumptions. These results show that Nesterov acceleration can be adapted to bilinear saddle structures. Nevertheless, a complete continuous-discrete theory is still missing. In particular, one would like to obtain trajectory convergence and improved $o(1/t^{2})$ estimates for the continuous dynamic, and at the same time derive a discrete algorithm whose last-iterate convergence and asymptotic rates are consistent with the continuous-time model.

 The aim of this paper is to develop such a continuous-discrete framework for \eqref{prob:saddle} under merely convex-concave assumptions. The framework is designed to capture the same cancellation mechanism in both the continuous and discrete settings. By constructing a continuous Lyapunov function and a discrete energy sequence, we establish accelerated convergence rates, full convergence, and improved little-$o$ asymptotic estimates for the proposed dynamic and its associated accelerated primal-dual algorithm. Our main contributions are summarized as follows.

{\bf (a) Continuous-time analysis.} We study a Nesterov-type inertial primal-dual dynamical system with vanishing damping $\alpha/t$ for the bilinear saddle point problem \eqref{prob:saddle}. Related dynamics have been considered in \cite{ZengIFAC,HeAMO,DingCOAP}, where boundedness of trajectories and $\bigO(1/t^{2})$ primal-dual gap rates were obtained. In this paper, we go beyond these estimates by proving convergence of the whole primal-dual trajectory to a saddle point under only the convexity and continuous differentiability of $f$ and $g$. In the noncritical case $\alpha>3$, we establish the improved estimates 
\[ 
 	\mathcal L(x(t),y^*)-\mathcal L(x^*,y(t))=o\left(\frac{1}{t^{2}}\right), \qquad \|(\dot x(t),\dot y(t))\|=o\left(\frac{1}{t}\right). 
\] 
When $f$ and $g$ have Lipschitz continuous gradients, we also prove an $o(1/t)$ rate for the stationarity residual. To the best of our knowledge, trajectory convergence and such little-$o$ estimates for Nesterov-type saddle dynamics have not previously been established in this general convex-concave setting. These results extend the asymptotic convergence theory for Nesterov-type dynamics \cite{HeTc,MayTJM,Jang,AttouchMp,HeSIAM,BotJDE} from unconstrained and linearly constrained optimization to bilinear saddle point problems.

{\bf (b) Discrete-time analysis.} We derive a fast primal-dual algorithm by a structure-preserving discretization of the continuous dynamic. The scheme combines Nesterov extrapolation with a semi-implicit bilinear correction, leading to a discrete Lyapunov estimate. It can be implemented by solving one strongly convex quadratic primal subproblem followed by one explicit dual update. Under the Lipschitz continuity of $\nabla f$ and $\nabla g$, we prove the accelerated saddle gap estimate
$
	\mathcal L(x_k,y^*)-\mathcal L(x^*,y_k)=\bigO\left({1}/{t_k^{2}}\right), 
$
which gives an $\bigO(1/k^{2})$ rate for standard choices of $\{t_k\}$. Compared with classical primal-dual splitting type methods \cite{KorpelevichEMM,NemirovskiSIOPT,TsengSICON,PopovMN,MokhtariSIOPT}, which usually yield $\bigO(1/k)$ guarantees in the general convex-concave case, the proposed method achieves a faster rate. Moreover, compared with the discrete algorithms in \cite{DingCOAP}, we further prove convergence of the whole generated sequence under a critical Nesterov-type parameter rule, and obtain $o(1/t_k^{2})$ estimates for the primal-dual gap and $o(1/t_k)$ estimates for the stationarity residual in the noncritical regime.

\subsection{Organization}

The rest of the paper is organized as follows. Section \ref{sec:prelim} collects the notation and preliminary facts used throughout the paper. Section \ref{sec:continuous} studies the continuous-time model, proves the Lyapunov estimate, establishes trajectory convergence, and derives the little-$o$ rates. Section \ref{sec:discrete} derives the discrete algorithm and proves the corresponding discrete Lyapunov estimate and convergence results. Section \ref{sec:numerics} reports numerical experiments. Section \ref{sec:Conclusion} concludes the paper.

\section{Preliminaries}\label{sec:prelim}
 
In this section, we collect the notation and several elementary facts that will be used in the subsequent analysis. Denote by $\cS$ the saddle point set of \eqref{prob:saddle}. For any $z^*=(x^*,y^*)\in\cS$, the saddle point inequality holds:
\begin{equation}\label{eq:saddle-ineq}
    \cL(x^*,y)\le \cL(x^*,y^*)\le \cL(x,y^*),
    \qquad
    \forall (x,y)\in\R^n\times\R^m.
\end{equation}
Throughout this paper, we assume that $\cS\ne\emptyset$. Since $f$ and $g$ are differentiable, the saddle point condition is equivalent to the following first-order system:
\begin{equation}\label{eq:kkt-saddle}
    \nabla f(x^*)+K^\top y^*=0,
    \qquad
    \nabla g(y^*)-Kx^*=0.
\end{equation}

To state convergence rates for both the continuous trajectory and the discrete iterates, we use two standard measures of optimality. The first one is the primal-dual gap with respect to a fixed saddle point. More precisely, for $z^*=(x^*,y^*)\in\cS$, define
\[
    \cG_{z^*}(z):=\cL(x,y^*)-\cL(x^*,y),
    \qquad z=(x,y)\in\R^n\times\R^m.
\]
By \eqref{eq:saddle-ineq}, we have
$ \cG_{z^*}(z)\ge0$  for any  $z\in\R^n\times\R^m$ and $\cG_{z^*}(z^*)=0$.

The second measure is the stationarity residual associated with the first-order saddle system. It will be used later to quantify the violation of stationarity. Set
\begin{equation}\label{def:zhS} 
	 z=(x,y)\in\R^n\times\R^m, \qquad h(z):=f(x)+g(y), \qquad S:= \begin{pmatrix} 0&K^\top\\ -K&0 \end{pmatrix}. 
\end{equation}
Then $h$ is convex and differentiable, while $S$ is skew-symmetric, namely $S^\top=-S$. Hence
\[
 	\ip{Sz}{z}=0, \qquad \forall z\in\R^n\times\R^m.
\]
We define the stationarity residual by
\[
  F(z):=\nabla h(z)+Sz=\begin{pmatrix} \nabla f(x)+K^\top y\\ \nabla g(y)-Kx \end{pmatrix}. 
\]
Therefore,
\begin{equation}\label{eq:eqvF} 
 		z^*=(x^*,y^*)\in\cS \quad\Longleftrightarrow\quad F(z^*)=0. 
\end{equation}
The quantity $\|F(z)\|$ measures the violation of the first-order saddle system \eqref{eq:kkt-saddle}. Thus, the primal-dual gap $\cG_{z^*}(z)$ and the stationarity residual $\|F(z)\|$ provide two complementary descriptions of convergence. The operator $F$ is monotone. Indeed, for any $z_i=(x_i,y_i)\in\R^n\times\R^m$, $i=1,2$, the skew-symmetry of $S$ gives
\begin{equation}\label{eq:Fmono}
\begin{aligned}
    &\ip{F(z_1)-F(z_2)}{z_1-z_2} =
    \ip{\nabla h(z_1)-\nabla h(z_2)}{z_1-z_2}\\
   & \qquad=\ip{\nabla f(x_1)-\nabla f(x_2)}{x_1-x_2}
  +\ip{\nabla g(y_1)-\nabla g(y_2)}{y_1-y_2}\\
      &\qquad \ge 0,
\end{aligned}
\end{equation}
where the last inequality follows from the convexity of $f$ and $g$.

We next record a useful expression for the primal-dual gap. Since $z^*\in\cS$, \eqref{eq:eqvF} gives $Sz^*=-\nabla h(z^*)$. Thus, for any $z=(x,y)$,
\begin{equation}\label{eq:gap} 
 \begin{aligned}
 	 \cG_{z^*}(z) &= \cL(x,y^*)-\cL(x^*,y)\\ 
 	 	&= f(x)-f(x^*)+g(y)-g(y^*)+\ip{Kx}{y^*}-\ip{Kx^*}{y}\\ 
 	 	&= h(z)-h(z^*)-\ip{\nabla h(z^*)}{z-z^*}. 
 	\end{aligned} 
\end{equation}

The following structural property of the saddle point set will be used in the convergence analysis.
\begin{proposition}\label{prop:str} 
 Consider \eqref{prob:saddle}, where $f:\R^n\to\R$ and $g:\R^m\to\R$ are convex and differentiable. Let $z_i=(x_i,y_i)\in\cS$, $i=1,2$. Then
 \begin{itemize}
 	\item [(i)] $\nabla f(x_1)=\nabla f(x_2)$ and $\nabla g(y_1)=\nabla g(y_2)$;
 	\item [(ii)] $Kx_1=Kx_2$ and $K^\top y_1=K^\top y_2$;
 	\item [(iii)] $\cG_{z_1}(z)=\cG_{z_2}(z)$  for any $z\in\R^n\times\R^m$.
 \end{itemize}
\end{proposition}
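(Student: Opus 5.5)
The plan is to combine the monotonicity identity \eqref{eq:Fmono} with the characterization \eqref{eq:eqvF}, and then to use the gap representation \eqref{eq:gap} and the saddle point inequality \eqref{eq:saddle-ineq}.

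\textbf{Step 1 (cocoercive-type identity).} Since $z_1,z_2\in\cS$, \eqref{eq:eqvF} gives $F(z_1)=F(z_2)=0$. Then \eqref{eq:Fmono} yields
\[
0=\ip{\nabla f(x_1)-\nabla f(x_2)}{x_1-x_2}+\ip{\nabla g(y_1)-\nabla g(y_2)}{y_1-y_2}.
\]
Both terms are nonnegative, so each vanishes.

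\textbf{Step 2: part (i).} Monotonicity alone does not force the gradients to coincide, so I expect this to be the main obstacle. To get around it, I will use the convexity inequality in a sharper form. Set $\phi(u)=f(u)-f(x_1)-\ip{\nabla f(x_1)}{u-x_1}\ge0$. From $\ip{\nabla f(x_1)-\nabla f(x_2)}{x_1-x_2}=0$, summing the two convexity inequalities $f(x_2)\ge f(x_1)+\ip{\nabla f(x_1)}{x_2-x_1}$ and $f(x_1)\ge f(x_2)+\ip{\nabla f(x_2)}{x_1-x_2}$ shows that both are equalities. Hence $\phi(x_2)=0$, so $x_2$ minimizes the convex differentiable function $\phi\ge0$. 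Therefore $\nabla\phi(x_2)=\nabla f(x_2)-\nabla f(x_1)=0$. The same argument applies to $g$.

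\textbf{Step 3: part (ii).} This follows immediately from (i) and \eqref{eq:kkt-saddle}:
\[
K^\top y_1=-\nabla f(x_1)=-\nabla f(x_2)=K^\top y_2,
\qquad
Kx_1=\nabla g(y_1)=\nabla g(y_2)=Kx_2.
\]

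\textbf{Step 4: part (iii).} I will use \eqref{eq:gap}: $\cG_{z_i}(z)=h(z)-h(z_i)-\ip{\nabla h(z_i)}{z-z_i}$. By (i), $\nabla h(z_1)=\nabla h(z_2)$, so it suffices to show that $h(z_1)+\ip{\nabla h(z_1)}{z_2-z_1}=h(z_2)$. This is exactly the equality established in Step 2, namely $f(x_2)=f(x_1)+\ip{\nabla f(x_1)}{x_2-x_1}$ together with its analogue for $g$. Adding these two equalities gives the claim.

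Alternatively, (iii) follows from the saddle inequality. For saddle points, $\cL(x_1,y_1)=\cL(x_2,y_2)$. Moreover, $\cL(x,y_1)-\cL(x,y_2)=\ip{Kx}{y_1-y_2}-g(y_1)+g(y_2)$, and this is independent of $x$ because $K^\top y_1=K^\top y_2$. Either route works; I will present the one based on \eqref{eq:gap}.
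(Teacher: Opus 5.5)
Your proof is correct and follows essentially the same route as the paper. Both arguments use \eqref{eq:Fmono} to show that the two convexity inequalities between $x_1$ and $x_2$ hold with equality. You then get (i) by Fermat's rule applied to the nonnegative function $\phi$, which vanishes at $x_2$; the paper instead uses $\nabla f(x_2)\in\partial f(x_1)=\{\nabla f(x_1)\}$, which is the same observation with $x_1$ and $x_2$ swapped. Both proofs then obtain (ii) from \eqref{eq:kkt-saddle} and (iii) from \eqref{eq:gap} via that same equality.

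Your alternative saddle-inequality route for (iii) is only sketched. To finish it you would still need to show that the constant difference of the two gaps is zero. This uses the interchangeability fact $\cL(x_2,y_2)\le\cL(x_1,y_2)\le\cL(x_1,y_1)$, which forces $\cL(x_1,y_2)=\cL(x_1,y_1)$. Since you present the \eqref{eq:gap} route, nothing is missing from your main proof.
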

\begin{proof}
Since $z_i\in\cS$, we have $F(z_i)=0$ for $i=1,2$. Hence, by \eqref{eq:Fmono},
\[ 
\begin{aligned}
0 &= \ip{F(z_1)-F(z_2)}{z_1-z_2}\\
  &= \ip{\nabla f(x_1)-\nabla f(x_2)}{x_1-x_2}
     +\ip{\nabla g(y_1)-\nabla g(y_2)}{y_1-y_2}.
\end{aligned}
\]
By the convexity of $f$ and $g$, both terms on the right-hand side are nonnegative. Therefore,
\begin{equation}\label{eq:str0}
   \ip{\nabla f(x_1)-\nabla f(x_2)}{x_1-x_2}=0, \qquad
   \ip{\nabla g(y_1)-\nabla g(y_2)}{y_1-y_2}=0.
\end{equation}

We first prove that $\nabla f(x_1)=\nabla f(x_2)$. Set
$
   A_f:=f(x_1)-f(x_2)-\ip{\nabla f(x_2)}{x_1-x_2}
$
and
$
   B_f:=f(x_2)-f(x_1)-\ip{\nabla f(x_1)}{x_2-x_1}.
$
By the convexity of $f$, we have $A_f\ge0$ and $B_f\ge0$. Moreover, \eqref{eq:str0} implies
\[
   A_f+B_f=\ip{\nabla f(x_1)-\nabla f(x_2)}{x_1-x_2}=0.
\]
Thus $A_f=B_f=0$. In particular,
\begin{equation}\label{eq_f0} 
   		f(x_1)=f(x_2)+\ip{\nabla f(x_2)}{x_1-x_2}.
\end{equation}
On the other hand, the convexity of $f$ gives
$
   f(x)\ge f(x_2)+\ip{\nabla f(x_2)}{x-x_2},
$ for any $x\in\R^n$.
Combining this inequality with \eqref{eq_f0}, we obtain
\[
   f(x)\ge f(x_1)+\ip{\nabla f(x_2)}{x-x_1},
   \qquad \forall x\in\R^n.
\]
Hence $\nabla f(x_2)\in\partial f(x_1)=\{\nabla f(x_1)\}$. Therefore, $\nabla f(x_1)=\nabla f(x_2)$. Applying the same argument to $g$ and using the second equality in \eqref{eq:str0}, we obtain
\begin{equation}\label{eq_g0} 
   g(y_1)=g(y_2)+\ip{\nabla g(y_2)}{y_1-y_2},
\end{equation}
and $\nabla g(y_1)=\nabla g(y_2)$. This proves (i).

Next, since $z_i\in\cS$, the saddle point conditions give
\[
   \nabla f(x_i)+K^\top y_i=0, \qquad
   \nabla g(y_i)-Kx_i=0, \qquad i=1,2.
\]
Using (i) in these identities yields (ii).

It remains to prove (iii). From (i), we have $\nabla h(z_1)=\nabla h(z_2)$. By \eqref{eq:gap}, for any $z\in\R^n\times\R^m$,
\begin{align*} 
 \cG_{z_1}(z)-\cG_{z_2}(z)
 &= h(z_2)-h(z_1)-\ip{\nabla h(z_1)}{z-z_1}
    +\ip{\nabla h(z_2)}{z-z_2} \\ 
 &= h(z_2)-h(z_1)+\ip{\nabla h(z_2)}{z_1-z_2} \\ 
 &=0,
\end{align*}
where the last equality follows from \eqref{eq_f0} and \eqref{eq_g0}. Hence (iii) holds.
\end{proof}

 We shall also use the following smoothness assumption when deriving estimates for the stationarity residual and for the discrete algorithm.

\begin{assumption}\label{ass:disc}
 The functions $f:\R^n\to\R$ and $g:\R^m\to\R$ are convex and differentiable. Moreover, $\nabla f$ and $\nabla g$ are Lipschitz continuous with constants $L_f>0$ and $L_g>0$, respectively; that is, for any $x,\tilde x\in\R^n$ and $y,\tilde y\in\R^m$,
 \[
    \|\nabla f(x)-\nabla f(\tilde x)\| \le L_f\|x-\tilde x\|,
    \qquad
    \|\nabla g(y)-\nabla g(\tilde y)\| \le L_g\|y-\tilde y\|.
 \] 
\end{assumption}

Let $r,s>0$ and define
\begin{equation}\label{def:Ddisc} 
D:= \begin{pmatrix} r^{-1}I_n&0\\ 0&s^{-1}I_m \end{pmatrix}, \qquad
\|z\|_D^2:=\ip{Dz}{z}, \qquad z=(x,y)\in\R^n\times\R^m.
\end{equation}
Then $D$ is a positive definite matrix.

\begin{lemma}\label{lem:D-smooth}
 Let  \Cref{ass:disc} hold and suppose that $0<r\le \frac1{L_f}$ and $0<s\le \frac1{L_g}$. Then, for all $u=(x,y)\in\R^n\times\R^m$ and $v=(\tilde x,\tilde y)\in\R^n\times\R^m$,
 \begin{itemize}
 	\item [(i)] $h(u)\le h(v)+\ip{\nabla h(v)}{u-v}+\frac12\|u-v\|_D^2$;
 	\item [(ii)] $\|\nabla h(u)-\nabla h(v)\|^2\le 2\max\{L_f,L_g\}\bigl(h(u)-h(v)-\ip{\nabla h(v)}{u-v}\bigr)$.
 \end{itemize}
Here $h$ and $D$ are defined in \eqref{def:zhS} and \eqref{def:Ddisc}, respectively.
\end{lemma}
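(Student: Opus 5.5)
Both parts follow from the standard one-dimensional-in-each-block smoothness inequalities, using the separable structure $h(z)=f(x)+g(y)$. Throughout, $u=(x,y)$ and $v=(\tilde x,\tilde y)$, so that $u-v=(x-\tilde x,\,y-\tilde y)$ and $\nabla h(v)=(\nabla f(\tilde x),\nabla g(\tilde y))$.

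For (i), I will invoke the descent lemma for $L_f$-smooth $f$ and for $L_g$-smooth $g$:
\[
f(x)\le f(\tilde x)+\ip{\nabla f(\tilde x)}{x-\tilde x}+\frac{L_f}{2}\|x-\tilde x\|^2,
\qquad
g(y)\le g(\tilde y)+\ip{\nabla g(\tilde y)}{y-\tilde y}+\frac{L_g}{2}\|y-\tilde y\|^2 .
\]
If one wants it self-contained, this comes from writing $f(x)-f(\tilde x)=\int_0^1\ip{\nabla f(\tilde x+\tau(x-\tilde x))}{x-\tilde x}\,d\tau$ and applying Cauchy--Schwarz with the Lipschitz bound. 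Since $r\le 1/L_f$ and $s\le 1/L_g$, we have $L_f\le r^{-1}$ and $L_g\le s^{-1}$. Summing the two inequalities then gives the claim, because $\|u-v\|_D^2=r^{-1}\|x-\tilde x\|^2+s^{-1}\|y-\tilde y\|^2$.

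For (ii), I will use the cocoercivity-type estimate for convex $L$-smooth functions:
\[
\frac{1}{2L_f}\|\nabla f(x)-\nabla f(\tilde x)\|^2\le f(x)-f(\tilde x)-\ip{\nabla f(\tilde x)}{x-\tilde x},
\]
and the analogous estimate for $g$. The only step needing an argument is this inequality. I would prove it in the standard way. Set $\phi(w):=f(w)-\ip{\nabla f(\tilde x)}{w}$. Then $\phi$ is convex and $L_f$-smooth, and it is minimized at $\tilde x$ since $\nabla\phi(\tilde x)=0$. Applying the descent lemma to $\phi$ at the point $x-\frac1{L_f}\nabla\phi(x)$ gives
\[
\phi(\tilde x)\le \phi\Bigl(x-\tfrac1{L_f}\nabla\phi(x)\Bigr)\le\phi(x)-\frac1{2L_f}\|\nabla\phi(x)\|^2 ,
\]
which rearranges to the estimate above.

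Let $L:=\max\{L_f,L_g\}$. Multiplying the two estimates by $2L_f\le 2L$ and $2L_g\le 2L$ respectively is legitimate, because the Bregman terms are nonnegative by convexity. Adding the results yields
\[
\|\nabla h(u)-\nabla h(v)\|^2=\|\nabla f(x)-\nabla f(\tilde x)\|^2+\|\nabla g(y)-\nabla g(\tilde y)\|^2\le 2L\bigl(h(u)-h(v)-\ip{\nabla h(v)}{u-v}\bigr).
\]
Note that (ii) does not actually need the step-size restriction on $r$ and $s$. The main obstacle, such as it is, lies in the cocoercivity step. Everything else is the separable summation.
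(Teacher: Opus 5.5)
Your proposal is correct and follows the paper's proof. Both parts are obtained blockwise and then summed using $L_f\le r^{-1}$, $L_g\le s^{-1}$ and $\max\{L_f,L_g\}$. For (i) the tool is the descent lemma, and for (ii) it is the estimate $\frac{1}{2L_f}\|\nabla f(x)-\nabla f(\tilde x)\|^2\le f(x)-f(\tilde x)-\ip{\nabla f(\tilde x)}{x-\tilde x}$ and its analogue for $g$; the paper cites Nesterov's Theorem 2.1.5 for both inequalities, whereas you derive them directly, and your remark that (ii) does not use the stepsize restriction is right.
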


\begin{proof}
Let $u=(x,y)$ and $v=(\tilde x,\tilde y)$. Since $\nabla f$ and $\nabla g$ are Lipschitz continuous, it follows from \cite[Theorem 2.1.5]{Nesterov18} and $0<r\le \frac1{L_f}$, $0<s\le \frac1{L_g}$ that
\[
\begin{aligned}
h(u)&=f(x)+g(y)\\
&\le f(\tilde x)+\ip{\nabla f(\tilde x)}{x-\tilde x}
   +\frac{L_f}{2}\|x-\tilde x\|^2
   +g(\tilde y)+\ip{\nabla g(\tilde y)}{y-\tilde y}
   +\frac{L_g}{2}\|y-\tilde y\|^2\\
&\le h(v)+\ip{\nabla h(v)}{u-v}
   +\frac{1}{2r}\|x-\tilde x\|^2
   +\frac{1}{2s}\|y-\tilde y\|^2\\
&= h(v)+\ip{\nabla h(v)}{u-v}+\frac12\|u-v\|_D^2.
\end{aligned}
\]
This proves (i).

Since $f$ and $g$ are convex with Lipschitz continuous gradients, it also follows from \cite[Theorem 2.1.5]{Nesterov18} that
\[
\begin{aligned}
& \frac1{L_f}\|\nabla f(x)-\nabla f(\tilde x)\|^2
 + \frac1{L_g}\|\nabla g(y)-\nabla g(\tilde y)\|^2\\
&\qquad\le
2\bigl(f(x)-f(\tilde x)-\ip{\nabla f(\tilde x)}{x-\tilde x}
+g(y)-g(\tilde y)-\ip{\nabla g(\tilde y)}{y-\tilde y}\bigr)\\
&\qquad=
2\bigl(h(u)-h(v)-\ip{\nabla h(v)}{u-v}\bigr).
\end{aligned}
\]
This together with $h(z)=f(x)+g(y)$ implies (ii).
\end{proof}

We conclude this section with several auxiliary lemmas.

\begin{lemma}\cite[Lemma A.2]{HePDarxiv}\label{le_wbound} 
Let $D\in\mathbb R^{d\times d}$ be symmetric positive definite, $\eta>0$, and $\{a_k\}_{k\ge1}$ be a nonnegative sequence. Let $\{z_k\}_{k\ge0}\subset\mathbb R^d$ with $z_1=z_0$, and fix $z^*\in\mathbb R^d$. Suppose that 
$
   \sup_{k\ge1} \|\eta(z_k-z^*)+a_k(z_k-z_{k-1})\|_D \le C
$
for some $C\ge0$. Then
\[
	\|z_k-z^*\|_D\le \frac{C}{\eta}, \qquad
	a_k\|z_k-z_{k-1}\|_D\le 2C, \qquad
	\forall k\ge1.
\]
\end{lemma}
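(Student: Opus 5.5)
The plan is to argue by induction on $k$, working with the $D$-norm and the triangle inequality. Set $w_k:=z_k-z^*$ and $v_k:=\eta w_k+a_k(z_k-z_{k-1})$, so that $\|v_k\|_D\le C$ for all $k\ge1$. Writing $z_k-z_{k-1}=w_k-w_{k-1}$ gives the identity
\[
(\eta+a_k)\,w_k=v_k+a_k\,w_{k-1},\qquad k\ge1 .
\]
This expresses $w_k$ as a combination of $v_k$ and $w_{k-1}$ with nonnegative weights.

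\textbf{Base case.} Since $z_1=z_0$, the difference term vanishes at $k=1$. Hence $v_1=\eta w_1$, and $\|w_1\|_D\le C/\eta$ follows directly; we also have $a_1\|z_1-z_0\|_D=0\le 2C$. If one prefers to include $k=0$, note that $w_0=w_1$, so $\|w_0\|_D\le C/\eta$ as well.

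\textbf{Inductive step.} Assume $\|w_{k-1}\|_D\le C/\eta$. Taking $D$-norms in the identity and using the triangle inequality gives
\[
(\eta+a_k)\|w_k\|_D\le C+a_k\frac{C}{\eta}=\frac{C}{\eta}(\eta+a_k).
\]
Since $\eta+a_k>0$, this yields $\|w_k\|_D\le C/\eta$, which proves the first claim for all $k\ge1$.

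\textbf{Velocity bound.} For the second claim, write $a_k(z_k-z_{k-1})=v_k-\eta w_k$. The triangle inequality together with the first claim then gives
\[
a_k\|z_k-z_{k-1}\|_D\le C+\eta\cdot\frac{C}{\eta}=2C .
\]

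There is no real obstacle here. The only point needing care is choosing the right convex-combination identity so that the induction closes with the same constant $C/\eta$, rather than with a constant that grows in $k$. The nonnegativity of $a_k$ is exactly what makes the weights $a_k/(\eta+a_k)$ and $1/(\eta+a_k)$ nonnegative, and this is where that hypothesis is used.
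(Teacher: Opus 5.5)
Your proof is correct and complete. The identity $(\eta+a_k)(z_k-z^*)=v_k+a_k(z_{k-1}-z^*)$, together with $z_1=z_0$ and $a_k\ge0$, closes the induction at the constant $C/\eta$, and the velocity bound then follows from one more triangle inequality. The paper gives no proof of its own here (it imports the lemma from \cite{HePDarxiv}), so there is nothing to compare against; your convex-combination induction is the standard argument for this kind of estimate.
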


\begin{lemma}\cite[Proposition~A.30]{Bertsekas} \label{le_bertsekas}
Let $\{\omega_k\}_{k\ge1}$, $\{\sigma_k\}_{k\ge1}$, and $\{q_k\}_{k\ge1}$ be nonnegative sequences satisfying
\[
    \omega_{k+1}\le (1-\sigma_k)\omega_k+\sigma_k q_k,
    \qquad 0< \sigma_k\le 1,
    \qquad \sum_{k=1}^{+\infty}\sigma_k=+\infty.
\]
If $q_k\to0$ as $k\to+\infty$, then $\omega_k\to0$.
\end{lemma}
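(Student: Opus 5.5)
This is Bertsekas' Proposition A.30 as quoted; I would prove it directly, using only the nonnegativity of the sequences and the divergence of $\sum\sigma_k$.

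\textbf{Step 1: reduction.} Fix $\varepsilon>0$. Since $q_k\to0$, there is $N$ with $q_k\le\varepsilon$ for all $k\ge N$. For such $k$ the recursion gives
\[
\omega_{k+1}-\varepsilon\le(1-\sigma_k)(\omega_k-\varepsilon).
\]
This is just the hypothesis rewritten, using $\sigma_kq_k\le\sigma_k\varepsilon$.

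\textbf{Step 2: iterate.} Let $u_k:=\max\{\omega_k-\varepsilon,0\}$. Because $1-\sigma_k\in[0,1)$, taking positive parts in Step 1 yields $u_{k+1}\le(1-\sigma_k)u_k$ for $k\ge N$. By induction,
\[
u_{k+1}\le u_N\prod_{j=N}^{k}(1-\sigma_j).
\]

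\textbf{Step 3: the product vanishes.} From $1-\sigma\le e^{-\sigma}$ we get
\[
\prod_{j=N}^{k}(1-\sigma_j)\le\exp\Bigl(-\sum_{j=N}^{k}\sigma_j\Bigr)\to0,
\]
since the tail of a divergent series of nonnegative terms still diverges. Hence $u_k\to0$, so $\limsup_k\omega_k\le\varepsilon$. As $\omega_k\ge0$ and $\varepsilon>0$ is arbitrary, $\omega_k\to0$.

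The argument has no real obstacle. The only point needing care is Step 2: we must pass to positive parts, because $\omega_k-\varepsilon$ may be negative and multiplying a negative quantity by the product would not give a useful bound. This works precisely because $\sigma_k\le1$, which makes the factor $1-\sigma_k$ nonnegative.
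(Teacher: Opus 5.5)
Your proof is correct. The shifted recursion $\omega_{k+1}-\varepsilon\le(1-\sigma_k)(\omega_k-\varepsilon)$, the passage to positive parts (valid since $0\le 1-\sigma_k<1$), and the bound $\prod_{j=N}^{k}(1-\sigma_j)\le\exp\bigl(-\sum_{j=N}^{k}\sigma_j\bigr)\to0$ together give $\limsup_k\omega_k\le\varepsilon$ for every $\varepsilon>0$. The paper gives no proof of its own and simply imports the result from Bertsekas; your $\varepsilon$-plus-product argument is the standard self-contained proof of this fact, so there is nothing to add.
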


\section{Convergence analysis for continuous-time primal-dual dynamic}\label{sec:continuous}

In this section, we further study the continuous-time primal-dual dynamical system associated with the bilinear saddle point problem \eqref{prob:saddle}. The following Nesterov accelerated primal-dual dynamical system has been introduced in \cite{ZengIFAC,HeAMO,DingCOAP}: 
\begin{equation}\label{dyn:cont-xy}
\begin{cases}
\ddot x(t)+\dfrac{\alpha}{t}\dot x(t)+\nabla f(x(t))+K^\top\bigl(y(t)+\theta t\dot y(t)\bigr)=0,\\[1.2ex]
\ddot y(t)+\dfrac{\alpha}{t}\dot y(t)+\nabla g(y(t))-K\bigl(x(t)+\theta t\dot x(t)\bigr)=0.
\end{cases}
\end{equation}
 Here $t\ge t_0>0$, $\alpha\ge3$ and $\theta\in[\frac1{\alpha-1},\frac12]$. The works \cite{ZengIFAC,HeAMO,DingCOAP} established accelerated $\bigO(t^{-2})$ estimates for the primal-dual gap, together with related boundedness and velocity estimates. However, the convergence of the whole trajectory and the improved little-$o$ convergence rates were not investigated there. Our purpose in this section is to prove these additional asymptotic properties.

For the subsequent analysis, it is convenient to rewrite \eqref{dyn:cont-xy} in a compact form. By the notation introduced in \eqref{def:zhS}, dynamic \eqref{dyn:cont-xy} is equivalent to 
\begin{equation}\label{dyn:cont-z}
    \ddot z(t)+\frac{\alpha}{t}\dot z(t)+\nabla h(z(t))+S\bigl(z(t)+\theta t\dot z(t)\bigr)=0,
    \qquad t\ge t_0>0.
\end{equation}
 This compact formulation separates the convex gradient part $\nabla h$ from the skew-symmetric coupling part $S$, and hence simplifies the Lyapunov analysis below.

Let $z(t)=(x(t),y(t))$ be a trajectory of dynamic \eqref{dyn:cont-z}. For a fixed saddle point $z^*=(x^*,y^*)\in\cS$, define the continuous Lyapunov energy function
\begin{equation}\label{def:E-cont}
    \cE_{z^*}(t)
    :=
    \theta^2t^2\cG_{z^*}(z(t))
    +\frac12\norm{v_{z^*}(t)}^2
    +\frac{\xi}{2}\norm{z(t)-z^*}^2,
\end{equation}
where
$
    \xi:=\theta\alpha-\theta-1$ and $
    v_{z^*}(t):=z(t)-z^*+\theta t\dot z(t).
$
Since $\alpha\ge3$ and $\theta\in[\frac1{\alpha-1},\frac12]$, we have $\xi\ge0$. Hence $\cE_{z^*}$ is nonnegative because $\cG_{z^*}\ge0$.

The next theorem recalls the basic estimates for dynamic \eqref{dyn:cont-xy}. These estimates follow from the Lyapunov computation used in \cite{ZengIFAC,HeAMO,DingCOAP}. At the endpoint $\theta=\frac1{\alpha-1}$, one has $\xi=0$, but the bound \[ \norm{z(t)-z^*+\theta t\dot z(t)} = \norm{v_{z^*}(t)} \le \sqrt{2\cE_{z^*}(t_0)} \] still implies boundedness of the trajectory and $\norm{\dot z(t)}=\bigO(1/t)$ by \cite[Lemma 1]{HeNN}. Thus the estimates below hold on the whole admissible range $\theta\in\left[\frac1{\alpha-1},\frac12\right]$.

\begin{theorem}\cite{HeAMO,ZengIFAC,DingCOAP}\label{thm:cont-est}
Let $z(t)=(x(t),y(t))$ be a trajectory of dynamic \eqref{dyn:cont-xy}. Then, for any $z^*=(x^*,y^*)\in\cS$, the energy $\cE_{z^*}$ is nonincreasing. Moreover, the trajectory $(x(t),y(t))$ is bounded, and the velocity and the primal-dual gap satisfy
\[
    \norm{(\dot x(t),\dot y(t))}=O\left(\frac1t\right),
    \qquad
    \cL(x(t),y^*)-\cL(x^*,y(t))=O\left(\frac1{t^2}\right).
\]
Furthermore, if $\alpha>3$ and $\theta\in\left(\frac{1}{\alpha-1},\frac{1}{2}\right)$, then
\[
 \int_{t_0}^{+\infty}t(\cL(x(t),y^*)-\cL(x^*,y(t)))\,dt<+\infty,
 \qquad
 \int_{t_0}^{+\infty}t\norm{\dot z(t)}^2\,dt<+\infty.
\]
\end{theorem}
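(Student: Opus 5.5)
We will differentiate the energy $\cE_{z^*}$ from \eqref{def:E-cont} along a trajectory of \eqref{dyn:cont-z} and show that
\[
\dot\cE_{z^*}(t)\le -\theta(1-2\theta)t\,\cG_{z^*}(z(t))-\theta\xi t\norm{\dot z(t)}^2 .
\]
Both coefficients are nonnegative on the admissible range, and strictly positive when $\theta\in(\frac1{\alpha-1},\frac12)$.

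\textbf{Derivative of the gap term.} From \eqref{eq:gap} we have $\frac{d}{dt}\cG_{z^*}(z(t))=\ip{\nabla h(z)-\nabla h(z^*)}{\dot z}$. Hence the first term of $\cE_{z^*}$ contributes
\[
2\theta^2t\,\cG_{z^*}+\theta^2t^2\ip{\nabla h(z)-\nabla h(z^*)}{\dot z}.
\]

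\textbf{Derivative of the velocity term.} We have $\dot v_{z^*}=(1+\theta)\dot z+\theta t\ddot z$. Substituting $\ddot z$ from \eqref{dyn:cont-z} gives
\[
\dot v_{z^*}=(1+\theta-\theta\alpha)\dot z-\theta t\bigl(\nabla h(z)+S(z+\theta t\dot z)\bigr).
\]
Since $z^*\in\cS$, \eqref{eq:eqvF} gives $\nabla h(z^*)+Sz^*=0$. We may therefore write
\[
\nabla h(z)+S(z+\theta t\dot z)=\nabla h(z)-\nabla h(z^*)+S v_{z^*}.
\]
This rewriting is the key cancellation: $\ip{Sv_{z^*}}{v_{z^*}}=0$ by skew-symmetry, so the whole bilinear rotation disappears from $\ip{v_{z^*}}{\dot v_{z^*}}$. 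What remains is
\[
\ip{v_{z^*}}{\dot v_{z^*}}=-\xi\ip{z-z^*+\theta t\dot z}{\dot z}-\theta t\ip{\nabla h(z)-\nabla h(z^*)}{z-z^*}-\theta^2t^2\ip{\nabla h(z)-\nabla h(z^*)}{\dot z}.
\]

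\textbf{Derivative of the distance term.} The third term contributes $\xi\ip{z-z^*}{\dot z}$, which cancels the cross term $-\xi\ip{z-z^*}{\dot z}$ above.

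\textbf{Combining.} The $t^2$ gradient terms also cancel, leaving
\[
\dot\cE_{z^*}=2\theta^2t\,\cG_{z^*}-\theta t\ip{\nabla h(z)-\nabla h(z^*)}{z-z^*}-\xi\theta t\norm{\dot z}^2 .
\]
By convexity of $h$ and \eqref{eq:gap}, $\ip{\nabla h(z)-\nabla h(z^*)}{z-z^*}\ge \cG_{z^*}(z)$. This yields the claimed inequality and shows that $\cE_{z^*}$ is nonincreasing.

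\textbf{Consequences of monotonicity.} Since $\cE_{z^*}(t)\le\cE_{z^*}(t_0)$, we get $\cG_{z^*}(z(t))\le \cE_{z^*}(t_0)/(\theta^2t^2)$, which is the $O(1/t^2)$ gap estimate. We also get $\norm{z(t)-z^*+\theta t\dot z(t)}\le\sqrt{2\cE_{z^*}(t_0)}$. Writing this with $\eta=1/\theta$ in the continuous analogue of \cref{le_wbound}, i.e.\ \cite[Lemma 1]{HeNN}, via the integrating factor $t^{1/\theta}$, gives boundedness of $z(t)$ and $\norm{\dot z(t)}=O(1/t)$. This argument does not need $\xi>0$, so it also covers the endpoint $\theta=\frac1{\alpha-1}$.

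\textbf{Integral estimates.} When $\alpha>3$ and $\theta\in(\frac1{\alpha-1},\frac12)$, both $1-2\theta$ and $\xi$ are strictly positive. Integrating the inequality for $\dot\cE_{z^*}$ over $[t_0,T]$ and using $\cE_{z^*}\ge0$ bounds $\int t\,\cG_{z^*}$ and $\int t\norm{\dot z}^2$ by a constant multiple of $\cE_{z^*}(t_0)$, uniformly in $T$; letting $T\to\infty$ gives both integral estimates.

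\textbf{Main obstacle.} The delicate step is the bookkeeping in $\ip{v_{z^*}}{\dot v_{z^*}}$. One has to recognize that the extrapolated coupling $S(z+\theta t\dot z)$ combines with $Sz^*$ into $Sv_{z^*}$, so that it vanishes exactly rather than having to be bounded.
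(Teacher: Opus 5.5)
Your proof is correct and follows the route the paper indicates. The paper does not reprove this theorem; it defers to the Lyapunov computation for $\cE_{z^*}$ in \cite{ZengIFAC,HeAMO,DingCOAP}, and your identity $\dot{\cE}_{z^*}=2\theta^2t\,\cG_{z^*}(z)-\theta t\ip{\nabla h(z)-\nabla h(z^*)}{z-z^*}-\xi\theta t\norm{\dot z}^2$, obtained by absorbing the coupling into $Sv_{z^*}$, reproduces that computation correctly. Your integrating-factor step with $t^{1/\theta}$ is exactly the use of \cite[Lemma 1]{HeNN} that the paper invokes for boundedness and $\norm{\dot z(t)}=\bigO(1/t)$ without needing $\xi>0$, so the endpoint $\theta=\frac1{\alpha-1}$ is covered too.
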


We now turn to the asymptotic behavior of the trajectory. The gap estimate in  \Cref{thm:cont-est} gives $\cG_{z^*}(z(t))\to0$, but this alone does not immediately imply convergence of $z(t)$. The next lemma proves that every cluster point of $z(t)$ belongs to $\cS$.

\begin{lemma}\label{lem:cont-cluster}
Let $z(t)=(x(t),y(t))$ be a trajectory of dynamic \eqref{dyn:cont-z}. Then every cluster point of $z(t)$ belongs to $\cS$.
\end{lemma}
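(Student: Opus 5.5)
The plan is to show directly that any cluster point $\bar z$ satisfies $F(\bar z)=0$, and then conclude $\bar z\in\cS$ by \eqref{eq:eqvF}. I would split $F(\bar z)=\nabla h(\bar z)+S\bar z$ into its gradient part and its skew part and treat them separately. Throughout, fix $z^*\in\cS$ and a sequence $t_n\to+\infty$ with $z(t_n)\to\bar z$.

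\emph{Step 1 (gradient part).} By \Cref{thm:cont-est}, $\cG_{z^*}(z(t_n))=O(1/t_n^2)\to0$. Since $h$ is continuous, \eqref{eq:gap} gives
\[
h(\bar z)-h(z^*)-\ip{\nabla h(z^*)}{\bar z-z^*}=0 .
\]
I would then repeat the convexity argument from the proof of \Cref{prop:str}. The convexity inequality at $z^*$, combined with this equality, yields $\nabla h(z^*)\in\partial h(\bar z)=\{\nabla h(\bar z)\}$. Hence $\nabla h(\bar z)=\nabla h(z^*)=-Sz^*$. It therefore remains to prove $S(\bar z-z^*)=0$, i.e.\ $K\bar x=Kx^*$ and $K^\top\bar y=K^\top y^*$. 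Note that gap information alone cannot give this: when $f=g=0$ the gap vanishes identically.

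\emph{Step 2 (an averaged identity from the dynamic).} Set $u(t):=z(t)+\theta t\dot z(t)$, so that $v_{z^*}=u-z^*$. Differentiating and using \eqref{dyn:cont-z} gives
\[
\dot u(t)=-\xi\dot z(t)-\theta t\bigl(\nabla h(z(t))+Su(t)\bigr).
\]
Using $\nabla h(z^*)+Sz^*=0$, this becomes
\[
\nabla h(z(t))-\nabla h(z^*)+S v_{z^*}(t)=-\frac{\dot u(t)+\xi\dot z(t)}{\theta t}.
\]
By \Cref{thm:cont-est}, $u$, $z$ and $v_{z^*}$ are bounded. Integrating this identity over a window $[t,t+\tau]$ and integrating the right-hand side by parts, the right-hand side becomes $O(1/t)$. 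On the window $[t_n,t_n+\tau]$ we have $\|z(s)-z(t_n)\|\le C\tau/t_n$ because $\|\dot z\|=O(1/t)$. Therefore $z(s)\to\bar z$ uniformly there, and the $\nabla h$ term averages to $\nabla h(\bar z)-\nabla h(z^*)=0$ by Step 1 and continuity of $\nabla h$. What remains is
\[
\frac1\tau\int_{t_n}^{t_n+\tau}S\bigl(z(s)-z^*+\theta s\dot z(s)\bigr)\,ds\to0 .
\]

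\emph{Step 3 (main obstacle).} The difficulty is the term $\theta S(s\dot z(s))$. It is $O(1)$ but not obviously $o(1)$, so Step 2 only gives $S(\bar z-z^*)+\theta\lim_n\frac1\tau\int S(s\dot z)\,ds=0$. My first attempt would use $s\dot z(s)=\frac{d}{ds}\bigl(s(z(s)-z^*)\bigr)-(z(s)-z^*)$ together with longer windows $[t,\lambda t]$, and then send $\lambda\downarrow1$ or $\lambda\to\infty$, hoping to show that the averaged velocity term vanishes. If that fails, I would use the monotonicity of $\cE_{z^*}$ more fully. In the identity $\frac{d}{dt}\cE_{z^*}$, the skew terms cancel exactly. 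I would then apply this at a second anchor $\hat z:=z^*+w$ with $w\in\ker\nabla$-compatible directions: by \Cref{prop:str}(i) the gradient part is unchanged, and only $Sw$ enters. Comparing $\cE_{z^*}$ with this shifted energy would force $\ip{S(z(t)-z^*)}{w}$-type quantities to be controlled, which in turn would pin down $S(\bar z-z^*)=0$. This step is where I expect essentially all the work to lie.
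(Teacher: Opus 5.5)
\textbf{There is a genuine gap: the proposal does not prove the lemma.} Your Step 1 and the identity in Step 2 are correct, but the argument stops exactly where the one real idea is needed.

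\emph{Why fixed windows fail.} Averaging over windows $[t_n,t_n+\tau]$ of fixed length cannot close the argument. Set $q(t):=S(z(t)-z^*)$. Then $\dot q=S\dot z$ and $Sv_{z^*}(t)=q(t)+\theta t\dot q(t)$. The window average of $\theta s\dot q(s)$ equals $\theta t_n\bigl(q(t_n+\tau)-q(t_n)\bigr)/\tau+o(1)$. This is an $O(1)$ quantity about which you have no information. The reason is that such windows have length $\ln(1+\tau/t_n)\to0$ in the variable $\ln t$, which is the time scale on which $q$ actually relaxes.

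\emph{Why the shifted anchor fails.} If $\hat z\in\cS$, then $S(\hat z-z^*)=0$ by \Cref{prop:str}(ii), so nothing new enters. If instead $\hat z=z^*+w\notin\cS$ with $\nabla h(\hat z)=\nabla h(z^*)$, then $F(\hat z)=Sw\neq0$. In that case $\frac{d}{dt}\cE_{\hat z}$ acquires the term $-\theta t\langle v_{\hat z}(t),Sw\rangle$, which is of order $t$ and has no sign. So the monotonicity you want to exploit is lost.

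\emph{The missing idea.} Read your Step 2 identity as a linear first-order (Euler-type) ODE for $q$, and integrate it exactly rather than averaging it:
\[
\theta t\dot q(t)+q(t)=-a(t)-\frac{\dot u(t)+\xi\dot z(t)}{\theta t},\qquad a(t):=\nabla h(z(t))-\nabla h(z^*).
\]
Let $p=1/\theta\ge2$. Then $\frac{d}{dt}(t^pq)=p\,t^{p-1}(q+\theta t\dot q)$, and hence
\[
q(t)=\Bigl(\frac{t_0}{t}\Bigr)^{p}q(t_0)-\frac{p}{t^p}\int_{t_0}^{t}s^{p-1}a(s)\,ds-\frac{p^2}{t^p}\int_{t_0}^{t}s^{p-2}\bigl(\dot u(s)+\xi\dot z(s)\bigr)\,ds .
\]
The three terms are handled as follows.
\begin{enumerate}
\item The first term tends to zero trivially.
\item For the last term, integrate by parts and use the boundedness of $u$ and $z$ from \Cref{thm:cont-est}; it is $O(t^{-2})$.
\item The middle term tends to zero by L'H\^opital, provided $a(t)\to0$ along the \emph{whole} trajectory, not only along $t_n$. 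This follows from your Step 1 applied at every cluster point, together with boundedness of $z$ and continuity of $\nabla h$ (argue by contradiction along a subsequence).
\end{enumerate}
Thus $q(t)\to0$, so $F(z(t))=a(t)+q(t)\to0$, and therefore $F(\bar z)=0$.

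This is the paper's proof. It writes the forcing as $a+\ddot z+\frac{\alpha}{t}\dot z$, which coincides with your $a+\frac{\dot u+\xi\dot z}{\theta t}$. It then handles the $\ddot z$ term by parts using $\|\dot z\|=O(1/t)$.
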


\begin{proof}
Fix an arbitrary point $\hat z=(\hat x,\hat y)\in\cS$. By \Cref{thm:cont-est}, the trajectory $z(t)$ is bounded and
$
    \cG_{\hat z}(z(t))=\bigO\left(1/{t^2}\right).
$
Hence $\cG_{\hat z}(z(t))\to0$.

Let $\bar z$ be any cluster point of $z(t)$. Then there exists a sequence $t_j\to+\infty$ such that $z(t_j)\to \bar z$. Passing to the limit in $\cG_{\hat z}(z(t_j))\to0$ and using \eqref{eq:gap} and the continuous differentiability of $h$, we obtain
$
   h(\bar z)-h(\hat z)-\ip{\nabla h(\hat z)}{\bar z-\hat z}=0.
$
Since $h$ is convex, for every $z\in\R^n\times\R^m$,
$
   h(z)\ge h(\hat z)+\ip{\nabla h(\hat z)}{z-\hat z}.
$
Combining the last two relations gives
\[
    h(z)\ge h(\bar z)+\ip{\nabla h(\hat z)}{z-\bar z},
    \qquad \forall z\in\R^n\times\R^m.
\]
Thus $\nabla h(\hat z)\in\partial h(\bar z)$. Since $h$ is differentiable, it follows that
$
    \nabla h(\bar z)=\nabla h(\hat z).
$

We next show that the whole gradient trajectory converges to $\nabla h(\hat z)$. Suppose, to the contrary, that there exist $\varepsilon_0>0$ and a sequence $s_j\to+\infty$ such that
$
    \norm{\nabla h(z(s_j))-\nabla h(\hat z)}\ge\varepsilon_0.
$
By the boundedness of $z(t)$, after passing to a subsequence if necessary, we may assume that $z(s_j)\to \tilde z$ for some cluster point $\tilde z$. The previous argument gives $\nabla h(\tilde z)=\nabla h(\hat z)$. By the continuity of $\nabla h$,
$
    \nabla h(z(s_j))\to \nabla h(\tilde z)=\nabla h(\hat z),
$
which is a contradiction. Therefore,
\begin{equation}\label{eq:gradhh}
    \nabla h(z(t))\to\nabla h(\hat z).
\end{equation}

It remains to identify the skew-symmetric part. Set
\begin{equation}\label{def:qa}
	 q(t):=S(z(t)-\hat z),
    \qquad
    a(t):=\nabla h(z(t))-\nabla h(\hat z).
\end{equation}
Then $a(t)\to0$ by \eqref{eq:gradhh}. Since $\hat z\in\cS$, from \eqref{eq:eqvF} we have
$
    F(\hat z)=\nabla h(\hat z)+S\hat z=0.
$
Therefore, dynamic \eqref{dyn:cont-z} can be rewritten as
\[
    \ddot z(t)+\frac{\alpha}{t}\dot z(t)
    +a(t)+q(t)+\theta t\dot q(t)=0.
\]
Equivalently,
\begin{equation}\label{eq:qaux}
     t\dot q(t)+\frac{1}{\theta}q(t)
    =
    -\frac{1}{\theta}
    \left(a(t)+\ddot z(t)+\frac{\alpha}{t}\dot z(t)\right).
\end{equation}
Let $p=\frac1\theta$. Since $\theta\le1/2$, we have $p\ge2$. Multiplying \eqref{eq:qaux} by $t^{p-1}$ and integrating over $[t_0,t]$ gives
\begin{align}
q(t)
=&\left(\frac{t_0}{t}\right)^p q(t_0)
  -pt^{-p}\int_{t_0}^{t}s^{p-1}a(s)\,ds \notag\\
&-pt^{-p}\int_{t_0}^{t}s^{p-1}\ddot z(s)\,ds
  -p\alpha t^{-p}\int_{t_0}^{t}s^{p-2}\dot z(s)\,ds .
  \label{eq:qrep}
\end{align}
The first term tends to zero. Since $a(t)\to0$, L'H\^opital's rule yields
\[
\begin{aligned}
\lim_{t\to+\infty}
\left\|t^{-p}\int_{t_0}^{t}s^{p-1}a(s)\,ds\right\|
&\le
\lim_{t\to+\infty}
t^{-p}\int_{t_0}^{t}s^{p-1}\|a(s)\|\,ds =
\lim_{t\to+\infty}\frac{\|a(t)\|}{p}
=0 .
\end{aligned}
\]
For the term involving $\ddot z$, integration by parts gives
\[\begin{aligned}
t^{-p}\int_{t_0}^{t}s^{p-1}\ddot z(s)\,ds
&=
t^{-1}\dot z(t)-t^{-p}t_0^{p-1}\dot z(t_0)
-(p-1)t^{-p}\int_{t_0}^{t}s^{p-2}\dot z(s)\,ds .
\end{aligned}
\]
By \Cref{thm:cont-est}, $\norm{\dot z(t)}=O(1/t)$. Hence
$
    t^{-1}\dot z(t)\to0$ and
$    t^{-p}t_0^{p-1}\dot z(t_0)\to0$.
Moreover, another application of L'H\^opital's rule gives
\[
\begin{aligned}
\lim_{t\to+\infty}
\left\|t^{-p}\int_{t_0}^{t}s^{p-2}\dot z(s)\,ds\right\|
&\le
\lim_{t\to+\infty}t^{-p}\int_{t_0}^{t}s^{p-2}\|\dot z(s)\|\,ds  \\
&=
\lim_{t\to+\infty}\frac{\|\dot z(t)\|}{pt}
=0.
\end{aligned}
\]
Therefore all terms on the right-hand side of \eqref{eq:qrep} tend to zero, and hence
\begin{equation}\label{eq:S-aux}
   q(t)=S(z(t)-\hat z)\to0.
\end{equation}
Combining \eqref{eq:gradhh}, \eqref{eq:S-aux}, and $F(\hat z)=0$, we obtain
\[
    F(z(t))
    =
    \nabla h(z(t))+Sz(t)
    =
    a(t)+q(t)
    \to0.
\]
Finally, let $\bar z$ be any cluster point of $z(t)$. Then there exists a sequence $t_j\to+\infty$ such that $z(t_j)\to\bar z$. Since $F(z(t))\to0$ and $F$ is continuous, we obtain
\[
    F(\bar z)=0.
\]
Therefore $\bar z\in\cS$. The proof is complete.
\end{proof}
 
\begin{theorem}\label{thm:cont-main}
Let $z(t)=(x(t),y(t))$ be a trajectory of dynamic \eqref{dyn:cont-z}. Then $(x(t),y(t))\to  (x^*,y^*)$ for some $(x^*,y^*)\in\cS$. Assume further that $\alpha>3$ and $\theta\in\left(\frac{1}{\alpha-1},\frac12\right)$. Then the following results hold:
\begin{itemize}
	\item [(i)] The primal-dual gap and the velocity satisfy
	\[
    \cG_{z^*}(z(t))
    =
    \cL(x(t),y^*)-\cL(x^*,y(t))
    =
    o\left(\frac1{t^2}\right),
    \qquad
    \norm{(\dot x(t),\dot y(t))}
    =
    o\left(\frac1t\right).
    \]
    \item [(ii)] If  \Cref{ass:disc} is satisfied, then the stationarity residual satisfies
    \[
    \|F(z(t))\|=\| \nabla h(z(t))+Sz(t)\|
    =
    \left\|\begin{pmatrix} \nabla f(x(t))+K^\top y(t)\\ \nabla g(y(t))-Kx(t) \end{pmatrix}\right\|
    =
    o\left(\frac1t\right).
    \]
\end{itemize}
\end{theorem}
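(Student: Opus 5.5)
The plan has two parts: an Opial-type argument for trajectory convergence, and the Attouch--Peypouquet technique for the little-$o$ rates.

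\emph{Convergence of the trajectory.} By \Cref{lem:cont-cluster}, every cluster point lies in $\cS$. So it suffices to show that $\lim_{t\to\infty}\|z(t)-z^*\|$ exists for every $z^*\in\cS$; Opial's lemma then applies. Fix $z^*\in\cS$ and set $\phi(t):=\frac12\|z(t)-z^*\|^2$. Then $\dot\phi=\ip{\dot z}{z-z^*}$ and $\frac12\|v_{z^*}\|^2=\phi+\theta t\dot\phi+\frac{\theta^2t^2}{2}\|\dot z\|^2$, so
\[
\cE_{z^*}(t)=\theta^2t^2\cG_{z^*}(z(t))+\tfrac{\theta^2t^2}{2}\|\dot z(t)\|^2+(1+\xi)\phi(t)+\theta t\dot\phi(t).
\]

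The key observation is that, by \Cref{prop:str}(iii), the term $\theta^2t^2\cG_{z^*}(z(t))$ does not depend on the choice of $z^*\in\cS$. The same holds for the velocity term. Now take two saddle points $z_1^*,z_2^*$. In the difference $\cE_{z_1^*}-\cE_{z_2^*}$, only the terms $(1+\xi)(\phi_1-\phi_2)+\theta t(\dot\phi_1-\dot\phi_2)$ survive. Moreover,
\[
\phi_1-\phi_2=\ip{z(t)}{z_2^*-z_1^*}+\text{const}
\]
is affine in $z(t)$.

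Since each $\cE_{z_i^*}$ is nonincreasing and bounded below by \Cref{thm:cont-est}, it has a limit. Hence $\psi:=\phi_1-\phi_2$ satisfies $(1+\xi)\psi+\theta t\dot\psi\to \ell$. This is a first-order linear ODE in $\psi$, and solving it gives $\psi(t)\to \ell/(1+\xi)$.

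This is enough for Opial's argument. Suppose $z(t_j)\to\bar z_1$ and $z(s_j)\to\bar z_2$, where both limits lie in $\cS$. Take $z_1^*=\bar z_1$ and $z_2^*=\bar z_2$ and compare the limits of $\psi$ along the two sequences. This yields $\|\bar z_1-\bar z_2\|^2=0$, so the whole trajectory converges to some $z^*\in\cS$.

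This circumvents the usual difficulty that $\phi$ itself need not have a monotone structure: the gap term cancels exactly because of \Cref{prop:str}(iii). I expect the main obstacle to be making sure that the endpoint $\xi=0$ causes no trouble. In that case $1+\xi=1$ is still positive, so the ODE argument goes through unchanged.

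\emph{Little-$o$ rates in (i).} Assume $\alpha>3$ and $\theta\in(\frac1{\alpha-1},\frac12)$. From the expression above with the limit point $z^*$, we already know $\phi(t)\to0$. The plan is to show that $W(t):=\theta^2t^2\cG_{z^*}(z(t))+\frac{\theta^2t^2}{2}\|\dot z(t)\|^2$ has a limit, and that this limit is zero.

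Existence of the limit: $\cE_{z^*}$ converges, and $\phi\to0$. Then $\theta t\dot\phi$ converges, because $|t\dot\phi|\le t\|\dot z\|\,\|z-z^*\|\to0$, using that $t\|\dot z\|$ is bounded. Hence $W(t)=\cE_{z^*}(t)-(1+\xi)\phi(t)-\theta t\dot\phi(t)$ converges.

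Value of the limit: \Cref{thm:cont-est} gives $\int_{t_0}^\infty W(t)/t\,dt<\infty$. Since $1/t$ is not integrable, the limit of $W$ must be $0$. This yields $\cG_{z^*}(z(t))=o(1/t^2)$ and $\|\dot z(t)\|=o(1/t)$.

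\emph{Stationarity rate in (ii).} Under \Cref{ass:disc}, \Cref{lem:D-smooth}(ii) together with \eqref{eq:gap} gives
\[
\|\nabla h(z)-\nabla h(z^*)\|^2\le 2\max\{L_f,L_g\}\,\cG_{z^*}(z).
\]
Combined with (i), this shows $\|a(t)\|=o(1/t)$, where $a$ is as in \eqref{def:qa}.

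For the skew part $q=S(z-z^*)$, I reuse the representation \eqref{eq:qrep} and multiply it by $t$. The pieces are handled as follows.
\begin{itemize}
\item The term $t\cdot t^{-p}\int s^{p-1}a$ is $o(1)$ by L'H\^opital, since $p\ge2>1$ and $s\,a(s)\to0$.
\item The integration by parts for the $\ddot z$ term produces $\dot z(t)$, which is $o(1/t)$.
\item The remaining terms are of the form $t^{1-p}\int s^{p-2}\dot z$. By L'H\^opital these behave like $\|\dot z(t)\|\cdot t/(p-1)\cdot t^{-1}\cdot t$, that is, like $t\|\dot z(t)\|/(p-1)\to0$; this uses $p>1$.
\item The initial term behaves like $t^{1-p}\to0$.
\end{itemize}
Hence $t\|q(t)\|\to0$. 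Since $F(z(t))=a(t)+q(t)$, this gives $\|F(z(t))\|=o(1/t)$.
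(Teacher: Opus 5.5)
Your proposal is correct and follows the paper's proof essentially step by step. In $\cE_{z_a}-\cE_{z_b}$ the gap terms cancel by \Cref{prop:str}(iii), and the resulting first-order relation $t\dot H+(\alpha-1)H\to$ const (your $1+\xi=\theta(\alpha-1)$) forces two cluster points to coincide. The rates in (i) follow from the finite limit of the energy together with the integrability estimates of \Cref{thm:cont-est}, and (ii) follows from \Cref{lem:D-smooth}(ii) together with \eqref{eq:qrep} multiplied by $t$.

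One harmless slip occurs in your third bullet. The L'H\^opital quotient there is $\|\dot z(t)\|/(p-1)$, not $t\|\dot z(t)\|/(p-1)$. Those terms therefore vanish already under $\|\dot z(t)\|=\bigO(1/t)$, and your conclusion is unaffected.
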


\begin{proof}
By \Cref{thm:cont-est}, the trajectory $z(t)$ is bounded. Hence it has at least one cluster point. By \Cref{lem:cont-cluster}, every cluster point of $z(t)$ belongs to $\cS$.

We now prove that the cluster point is unique. Suppose that $z_a,z_b\in\cS$ are two cluster points. Then there exist sequences $t_j,s_j\to+\infty$ such that
\[
    z(t_j)\to z_a,
    \qquad
    z(s_j)\to z_b.
\]
By \Cref{prop:str}, the primal-dual gaps generated by $z_a$ and $z_b$ coincide:
\[
    \cG_{z_a}(z)=\cG_{z_b}(z),
    \qquad \forall z\in\R^{n}\times\R^m.
\]
Thus the gap parts in $\cE_{z_b}(t)$ and $\cE_{z_a}(t)$ cancel. Using the definition of $\cE_{z^*}$ in \eqref{def:E-cont}, we obtain
\begin{equation}\label{eq:diffE}
\begin{aligned}
\cE_{z_b}(t)-\cE_{z_a}(t)
&=
\frac12\norm{z(t)-z_b+\theta t\dot z(t)}^2
-\frac12\norm{z(t)-z_a+\theta t\dot z(t)}^2  \\
&\quad
+\frac{\xi}{2}\norm{z(t)-z_b}^2
-\frac{\xi}{2}\norm{z(t)-z_a}^2  \\
&=
\frac{\theta(\alpha-1)}{2}
\left(\norm{z(t)-z_b}^2-\norm{z(t)-z_a}^2\right)
+\theta t \ip{z_a-z_b}{\dot z(t)}.
\end{aligned}
\end{equation}
Define
\[
    H(t):=
    \frac12\left(
    \norm{z(t)-z_b}^2-\norm{z(t)-z_a}^2
    \right).
\]
Then $\dot H(t)=\ip{z_a-z_b}{\dot z(t)}$. Since both $\cE_{z_a}$ and $\cE_{z_b}$ are nonincreasing and bounded from below, they have finite limits. Hence, by \eqref{eq:diffE},
\[
    R(t):=\frac{1}{\theta}\bigl(\cE_{z_b}(t)-\cE_{z_a}(t)\bigr)
    =
    t\dot H(t)+(\alpha-1)H(t)
\]
has a finite limit. Multiplying both sides by $t^{\alpha-2}$ and integrating over $[t_0,t]$ gives
\[
   H(t)
    =
    \left(\frac{t_0}{t}\right)^{\alpha-1}H(t_0)
    +
    t^{-(\alpha-1)}
    \int_{t_0}^{t}s^{\alpha-2}R(s)\,ds .
\]
Since $R(t)$ has a finite limit and $\alpha\ge3$, L'H\^opital's rule yields
\[
\lim_{t\to+\infty} H(t)
=
\lim_{t\to+\infty}
t^{-(\alpha-1)}
\int_{t_0}^{t}s^{\alpha-2}R(s)\,ds
=
\frac{\lim_{t\to+\infty} R(t)}{\alpha-1}.
\]
Thus $H(t)$ has a finite limit.

On the other hand, using $z(t_j)\to z_a$ and $z(s_j)\to z_b$, we have
\[
    H(t_j)\to \frac12\norm{z_a-z_b}^2,
    \qquad
    H(s_j)\to -\frac12\norm{z_a-z_b}^2.
\]
Since $H(t)$ has a finite limit, these two limits must be equal. Hence
$
    \frac12\norm{z_a-z_b}^2
    =
    -\frac12\norm{z_a-z_b}^2,
$
and therefore $z_a=z_b$. Thus the bounded trajectory has a unique cluster point in $\cS$. Denote it by
$
    z^*=(x^*,y^*)\in\cS.
$
Consequently, $z(t)=(x(t),y(t))\to z^*=(x^*,y^*)$ as $t\to+\infty$.

We next prove (i). Assume that
$
    \alpha>3,\
    \theta\in(\frac{1}{\alpha-1},\frac12).
$
Set $u(t):=z(t)-z^*$.
Since $z(t)\to z^*$, we have $u(t)\to0$. Expanding the energy function $\cE_{z^*}(t)$ gives
\[
\begin{aligned}
\cE_{z^*}(t)
&=
\theta^2t^2\cG_{z^*}(z(t))
+\frac12\norm{u(t)+\theta t\dot z(t)}^2
+\frac{\xi}{2}\norm{u(t)}^2\\
&=
\theta^2t^2\cG_{z^*}(z(t))
+\frac{\theta^2t^2}{2}\norm{\dot z(t)}^2
+\theta t\ip{u(t)}{\dot z(t)}
+\frac{1+\xi}{2}\norm{u(t)}^2.
\end{aligned}
\]
By \Cref{thm:cont-est}, $t\norm{\dot z(t)}$ is bounded. Since $u(t)\to0$, the last two terms tend to zero. Also, $\cE_{z^*}(t)$ has a finite limit because it is nonincreasing and bounded from below. Therefore there exists $\ell\ge0$ such that
\begin{equation}\label{eq:ocon}
\lim_{t\to+\infty}
\left(
t^2\cG_{z^*}(z(t))
+\frac{t^2}{2}\norm{\dot z(t)}^2
\right)=\frac{1}{\theta^2}\lim_{t\to+\infty}\cE_{z^*}(t)
=\ell\ge 0 .
\end{equation}
We show that $\ell=0$. Suppose, to the contrary, that $\ell>0$. Then there exists $T\ge t_0$ such that, for all $t\ge T$,
$
    t^2\cG_{z^*}(z(t))
    +\frac{t^2}{2}\norm{\dot z(t)}^2
    \ge \frac{\ell}{2}.
$
Hence
\[
\begin{aligned}
\int_T^{+\infty}
\left(
t\cG_{z^*}(z(t))
+\frac{t}{2}\norm{\dot z(t)}^2
\right)\,dt
&\ge
\int_T^{+\infty}\frac{\ell}{2t}\,dt
=+\infty.
\end{aligned}
\]
This contradicts the  estimates in \Cref{thm:cont-est}. Thus $\ell=0$, and \eqref{eq:ocon} implies (i).

We finally prove (ii). Suppose that \Cref{ass:disc} is satisfied. By (i), \eqref{eq:gap}, and \Cref{lem:D-smooth}(ii) with $u=z(t)$ and $v=z^*$, we have
$   \|\nabla h(z(t))-\nabla h(z^*)\|
   =
   o\left(1/t\right).
$
Equivalently,
\begin{equation}\label{eq:thto0}
 	\lim_{t\to+\infty}t\|\nabla h(z(t))-\nabla h(z^*)\| =0.
\end{equation}
Define $q(t)$ and $a(t)$ as in \eqref{def:qa} with $\hat z=z^*$. Then
$
    \lim_{t\to+\infty}t\|a(t)\|=0.
$
We now prove that
$
    \lim_{t\to+\infty}t\|q(t)\|=0.
$
Multiplying \eqref{eq:qrep} by $t$, we obtain
\begin{align}\label{eq:tqto0}
tq(t)
=&\frac{t_0^p}{t^{p-1}} q(t_0)
  -pt^{1-p}\int_{t_0}^{t}s^{p-1}a(s)\,ds \notag\\
&-pt^{1-p}\int_{t_0}^{t}s^{p-1}\ddot z(s)\,ds
  -p\alpha t^{1-p}\int_{t_0}^{t}s^{p-2}\dot z(s)\,ds,
\end{align}
where $p=\frac1\theta\ge2$. We first observe that the first term on the right-hand side of \eqref{eq:tqto0} tends to zero. For the second term, L'H\^opital's rule, together with $\lim_{t\to+\infty}t\|a(t)\|=0$, shows that it also tends to zero. By the same argument as in \Cref{lem:cont-cluster}, and using $\|\dot z(t)\|=o(1/t)$, we obtain that the last two terms also vanish. Therefore, \eqref{eq:tqto0} gives
\[
\lim_{t\to+\infty}t\|q(t)\|
=
\lim_{t\to+\infty}t\|S(z(t)-z^*)\|
=0.
\]
Combining this with \eqref{eq:thto0} and \eqref{eq:eqvF}, we obtain
\[
\lim_{t\to+\infty}t\|\nabla h(z(t))+Sz(t)\|
\le
\lim_{t\to+\infty}t\|\nabla h(z(t))-\nabla h(z^*)\|
+
\lim_{t\to+\infty}t\|S(z(t)-z^*)\|
=0.
\]
This proves (ii).
\end{proof}

\begin{remark}
\Cref{thm:cont-main} extends the convergence results for Nesterov-type dynamics from convex minimization \cite{AttouchMp,MayTJM,Jang,HeTc,BotJDE} to the bilinear saddle point problem \eqref{prob:saddle}. Compared with existing works on accelerated saddle dynamics with bilinear coupling \cite{ZengIFAC,HeAMO,DingCOAP}, which only establish accelerated $\bigO(1/t^{2})$ convergence rates for the primal-dual gap and boundedness of trajectories, \Cref{thm:cont-main} further proves convergence of the whole trajectory and improved little-$o$ rates for both the primal-dual gap and the velocity. Under the additional smoothness assumption, it also yields a little-$o$ estimate for the stationarity residual $\|F(z(t))\|$. Thus \Cref{thm:cont-main}  provides a continuous-time analogue of Nesterov asymptotic convergence in the bilinear saddle setting.
\end{remark}

  \section{Fast primal-dual algorithm}\label{sec:discrete}

In this section, we derive and analyze a fast primal-dual algorithm for the saddle point problem \eqref{prob:saddle} under  \Cref{ass:disc}. The algorithm is obtained from the continuous dynamic \eqref{dyn:cont-z} through a structure-preserving discretization.  

 \subsection{Finite-difference discretization and accelerated primal-dual scheme}
 We first explain how the discrete scheme is motivated by a direct finite-difference discretization of the continuous dynamic \eqref{dyn:cont-z}. Choose a stepsize $\sqrt h>0$ and set
$
    \tau_k=(k+\alpha-1)\sqrt h$ and $
    z_k\approx z(\tau_k).
$
At the node $\tau_k$, we use the standard second-order difference for $\ddot z(\tau_k)$ and the backward difference for $\dot z(\tau_k)$. The gradient term is evaluated at an extrapolated point, while the corrected skew-symmetric term is treated semi-implicitly. This gives
\begin{equation}\label{eq:fdbas}
\frac{z_{k+1}-2z_k+z_{k-1}}{h}
+\frac{\alpha}{\tau_k}\frac{z_k-z_{k-1}}{\sqrt h}
+\nabla h(\bar z_k)
+S\left(z_{k+1}
+\theta k(z_{k+1}-z_k)\right)=0.
\end{equation}       
 Here the extrapolated point is chosen according to Nesterov's accelerated scheme:
\begin{equation}\label{def:barfd}
    \bar z_k=(\bar x_k,\bar y_k)
    :=
    z_k+\frac{k-1}{k+\alpha-1}(z_k-z_{k-1}).
\end{equation}
Since $\tau_k=(k+\alpha-1)\sqrt h$, we have
\[
\begin{aligned}
\frac{z_{k+1}-2z_k+z_{k-1}}{h}
+\frac{\alpha}{\tau_k}\frac{z_k-z_{k-1}}{\sqrt h}
&=
\frac1h
\left[
z_{k+1}-z_k
-\frac{k-1}{k+\alpha-1}(z_k-z_{k-1})
\right]\\
&=
\frac1h(z_{k+1}-\bar z_k).
\end{aligned}
\]
Therefore, multiplying \eqref{eq:fdbas} by $h$, we obtain
\[
    z_{k+1}-\bar z_k
    +h\nabla h(\bar z_k)
    +hS\left(z_{k+1}
    +\theta k(z_{k+1}-z_k)\right)=0.
\]
Writing $z_k=(x_k,y_k)$ and using
$S=
  \begin{pmatrix}
        0&K^\top\\
        -K&0
    \end{pmatrix},
$
we obtain the finite-difference primal-dual scheme
\begin{equation}\label{alg:fd-xy}
\begin{cases}
	x_{k+1}-\bar x_k+h\left(\nabla f(\bar x_k)+K^\top\bigl(y_{k+1}+\theta k(y_{k+1}-y_k)\bigr)\right)=0,\\[0.8ex]
	y_{k+1}-\bar y_k+h\left(\nabla g(\bar y_k)-K\bigl(x_{k+1}+\theta k(x_{k+1}-x_k)\bigr)\right)=0.
\end{cases}
\end{equation}

The scheme \eqref{alg:fd-xy} uses a single stepsize $h$ for both variables. Since $f$ and $g$ may have different smoothness constants, we allow different primal and dual stepsizes $r$ and $s$, respectively. In addition, the coefficient $(k-1)/(k+\alpha-1)$ in \eqref{def:barfd} is only one particular Nesterov-type coefficient. We therefore replace it by a more general accelerated sequence. Let $\{t_k\}$ satisfy
\begin{equation}\label{ass:tk}
    t_1=1,\qquad
    t_k\to+\infty,\qquad
    t_{k+1}^2\le t_k^2+\rho t_{k+1},
    \qquad
    \rho\in(0,1].
\end{equation}
Under this parameter setting, we have 
$t_{k+1}-t_k=\dfrac{t_{k+1}^2-t_k^2}{t_{k+1}+t_k}
\le \dfrac{\rho t_{k+1}}{t_{k+1}+t_k}\le \rho\le1$.
Hence $t_k\le t_1+\sum_{i=1}^{k-1}(t_{i+1}-t_i)\le 1+\rho(k-1)\le k$.
Therefore $1/t_k\ge 1/k$, and consequently
\begin{equation}\label{eq:tkinf}
	\sum_{k=1}^{+\infty}\frac{1}{t_k}=+\infty.
\end{equation}

We define
\begin{equation}\label{def:bargen}
    \bar z_k=(\bar x_k,\bar y_k)
    :=
    z_k+\frac{t_k-1}{t_{k+1}}(z_k-z_{k-1}),
\end{equation}
that is,
\[
    \bar x_k=x_k+\frac{t_k-1}{t_{k+1}}(x_k-x_{k-1}),
    \qquad
    \bar y_k=y_k+\frac{t_k-1}{t_{k+1}}(y_k-y_{k-1}).
\]
The correction coefficient $\theta k$ in \eqref{alg:fd-xy} is also replaced by the parameter
$
    a_k:=\frac{t_{k+1}-\eta}{\eta}$ with
   $ \eta\in[\rho,1]$.
With these replacements, the finite-difference structure leads to the following accelerated primal-dual scheme:
\begin{equation}\label{alg:coupled-xy}
\begin{cases}
x_{k+1}-\bar x_k
+r\left(\nabla f(\bar x_k)
+K^\top\bigl(y_{k+1}+a_k(y_{k+1}-y_k)\bigr)\right)=0,\\[0.8ex]
y_{k+1}-\bar y_k
+s\left(\nabla g(\bar y_k)
-K\bigl(x_{k+1}+a_k(x_{k+1}-x_k)\bigr)\right)=0.
\end{cases}
\end{equation}

\begin{remark}
The generalized extrapolation coefficient in \eqref{def:bargen} contains the finite-difference coefficient in \eqref{def:barfd} as a special case. Indeed, if $t_k=\frac{k+\alpha-2}{\alpha-1}$, then
$\frac{t_k-1}{t_{k+1}}
    =
    \frac{k-1}{k+\alpha-1}$.
In this case, one can verify that \eqref{ass:tk} holds with $\rho=\frac{2}{\alpha-1}$. Taking $\eta=1$ gives $\theta=\frac{1}{\alpha-1}$
Thus, the correction term reduces to that in \eqref{alg:fd-xy} with $\theta=\frac{1}{\alpha-1}$. In the convergence analysis, we use the more flexible choice $a_k=(t_{k+1}-\eta)/\eta$, which preserves the same accelerated correction structure.
\end{remark}

To implement \eqref{alg:coupled-xy}, we rewrite it into an equivalent sequential form. For simplicity, denote
$
    c_k:=1+a_k.
$
From the second equation of \eqref{alg:coupled-xy}, we have
\[
    y_{k+1}
     = 
    \bar y_k-s\nabla g(\bar y_k)
    +s K(c_kx_{k+1}-a_kx_k).
\]
Substituting this expression into the first equation of \eqref{alg:coupled-xy}, we obtain
\[
\begin{aligned}
0
=&r^{-1}(x_{k+1}-\bar x_k)
+\nabla f(\bar x_k) \\
&+K^\top\left(c_k(\bar y_k-s\nabla g(\bar y_k))-a_ky_k\right)
+s c_kK^\top K(c_kx_{k+1}-a_kx_k).
\end{aligned}
\]
Therefore, $x_{k+1}$ is equivalently characterized as the unique minimizer of the following strongly convex quadratic subproblem:
\begin{equation}\label{eq:xsub}
\begin{aligned} 
 x_{k+1}
=
\arg\min_{x\in\R^n}
\Bigg\{
&\frac{1}{2r}\|x-\bar x_k\|^2
+\ip{\nabla f(\bar x_k)}{x}
+\ip{Kx}{c_k(\bar y_k-s\nabla g(\bar y_k))-a_ky_k}  \\
&\qquad
+\frac{s}{2}\|c_kKx-a_kKx_k\|^2
\Bigg\}.
\end{aligned}
\end{equation}
After $x_{k+1}$ is computed, $y_{k+1}$ is updated by the second equation of \eqref{alg:coupled-xy}. This gives the following iterative algorithm.

 \begin{algorithm}[H]
\caption{Fast primal-dual algorithm for \eqref{prob:saddle}}\label{alg:fast-pd}
\begin{algorithmic}
\State Choose $x_0=x_1\in\R^n$, $y_0=y_1\in\R^m$, stepsizes $0<r\le \frac1{L_f}$ and $0<s\le \frac1{L_g}$, a nondecreasing sequence $\{t_k\}$ satisfying \eqref{ass:tk}, and $a_k=\frac{t_{k+1}-\eta}{\eta}$ with $\eta\in[\rho,1]$.
\For{$k=1,2,\ldots$}
\State Compute the extrapolated points
\[
    \bar x_k=x_k+\frac{t_k-1}{t_{k+1}}(x_k-x_{k-1}),
    \qquad
    \bar y_k=y_k+\frac{t_k-1}{t_{k+1}}(y_k-y_{k-1}).
\]
\State Set $c_k=1+a_k$ and compute $x_{k+1}$ by solving
\[
\begin{aligned}
x_{k+1}
=
\arg\min_{x\in\R^n}
\Bigg\{
&\frac{1}{2r}\|x-\bar x_k\|^2
+\ip{\nabla f(\bar x_k)}{x}
+\ip{Kx}{c_k(\bar y_k-s\nabla g(\bar y_k))-a_ky_k}  \\
&\qquad
+\frac{s}{2}\|c_kKx-a_kKx_k\|^2
\Bigg\}.
\end{aligned}
\]
\State Update dual varible 
\[
    y_{k+1}
    =
 \bar y_k-s\nabla g(\bar y_k)+sK(c_kx_{k+1}-a_kx_k).
\]
\EndFor
\end{algorithmic}
\end{algorithm}

 Note that Algorithm \ref{alg:fast-pd} is equivalent to the coupled system \eqref{alg:coupled-xy}. Indeed, the dual update in the algorithm is obtained directly from the second equation of \eqref{alg:coupled-xy}. Substituting this expression of $y_{k+1}$ into the first equation gives exactly the optimality condition of the subproblem \eqref{eq:xsub}. Conversely, the optimality condition of \eqref{eq:xsub}, together with the dual update, recovers the two equations in \eqref{alg:coupled-xy}. Hence both formulations generate the same sequence. In the sequel, a sequence generated by \Cref{alg:fast-pd} is understood equivalently as a sequence satisfying \eqref{alg:coupled-xy}.

\subsection{Convergence analysis}

We now prove convergence properties of \Cref{alg:fast-pd}. In this part, we use the compact notation
\[
    z_k=(x_k,y_k),\quad \bar z_k=(\bar x_k,\bar y_k),
    \quad
    D=
    \begin{pmatrix}
        r^{-1}I_n&0\\
        0&s^{-1}I_m
    \end{pmatrix}.
\]
The coupled form of \eqref{alg:coupled-xy} can be written as
\begin{equation}\label{alg:coupZ}
    D(z_{k+1}-\bar z_k)+\nabla h(\bar z_k)
    +S\bigl(z_{k+1}+a_k(z_{k+1}-z_k)\bigr)=0.
\end{equation}
We shall use \eqref{alg:coupZ} in the convergence analysis of \Cref{alg:fast-pd}.

For any $z^*=(x^*,y^*)\in\cS$, define the   sequence
\begin{equation}\label{def:E-disc}
    \cE_k^{z^*}:=
    t_k^2\cG_{z^*}(z_k)+B_k^{z^*},
\end{equation}
where
\begin{equation}\label{def:B-disc}
    B_k^{z^*}:=
    \frac12\|w_k^{z^*}\|_D^2
    +\frac{\eta(1-\eta)}2\|z_k-z^*\|_D^2
\end{equation}
with
\begin{equation*}\label{def:w-disc}
    w_k^{z^*}:=\eta(z_k-z^*)+(t_k-1)(z_k-z_{k-1}).
\end{equation*}

The following lemma shows that $\cE_k^{z^*}$ is a discrete energy sequence and gives the summability estimates needed for the improved convergence rates.

\begin{lemma}\label{lem:disc}
Let $\{z_k\}=\{(x_k,y_k)\}$ be the sequence generated by \Cref{alg:fast-pd}. Then, for every $z^*\in\cS$, the sequence $\{\cE_k^{z^*}\}$ is  nonnegative and nonincreasing, consequently, $\lim_{k\to+\infty}\cE_k^{z^*}$ exists. Moreover, if $\rho<\eta<1$, then
\[
    \sum_{k=1}^{+\infty}t_{k+1}\cG_{z^*}(z_k)<+\infty,
    \qquad
    \sum_{k=1}^{+\infty}t_{k+1}\|z_{k+1}-z_k\|_D^2<+\infty.
\]
\end{lemma}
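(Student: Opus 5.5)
The plan is the standard discrete Lyapunov computation for Nesterov-type schemes, adapted to the skew term. Nonnegativity of $\cE_k^{z^*}$ is immediate: $\cG_{z^*}\ge0$, $\eta\le1$, and $D\succ0$.

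\textbf{Step 1: a one-step inequality for $h$.} Fix $z^*\in\cS$ and write $\cG_k:=\cG_{z^*}(z_k)$. Using \eqref{eq:gap}, $\cG_{z^*}(z)=h(z)-h(z^*)-\ip{\nabla h(z^*)}{z-z^*}$. I would apply \Cref{lem:D-smooth}(i) with $u=z_{k+1}$, $v=\bar z_k$, and then convexity of $h$ at $\bar z_k$ against both $z_k$ and $z^*$. Taking the convex combination with weights $1-\eta/t_{k+1}$ and $\eta/t_{k+1}$ gives
\[
\cG_{k+1}\le \Bigl(1-\tfrac{\eta}{t_{k+1}}\Bigr)\cG_k+\ip{\nabla h(\bar z_k)-\nabla h(z^*)}{z_{k+1}-\bar z_k+\tfrac{\eta}{t_{k+1}}(\bar z_k-z^*)}+\tfrac12\|z_{k+1}-\bar z_k\|_D^2 .
\]
For $t_{k+1}$ the coefficient is $\eta/t_{k+1}$. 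The parameter $\eta$ in $w_k$ is designed so that $t_{k+1}(z_{k+1}-\bar z_k)+\eta(\bar z_k-z^*)$ is comparable to $w_{k+1}-w_k$.

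\textbf{Step 2: substitute the scheme.} From \eqref{alg:coupZ}, $\nabla h(\bar z_k)-\nabla h(z^*)=-D(z_{k+1}-\bar z_k)-S(z_{k+1}-z^*)-a_kS(z_{k+1}-z_k)$, where $F(z^*)=0$ is used. Multiplying the Step 1 inequality by $t_{k+1}^2$, the next task is the algebraic identity
\[
t_{k+1}\bigl(z_{k+1}-\bar z_k\bigr)+\eta(\bar z_k-z^*)=w_{k+1}-(1-\eta)(z_k-z^*)\cdot(\ldots),
\]
that is, expressing it through $w_{k+1}$ and $w_k$. From the definition of $\bar z_k$, $t_{k+1}(z_{k+1}-\bar z_k)=t_{k+1}(z_{k+1}-z_k)-(t_k-1)(z_k-z_{k-1})$. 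This gives $w_{k+1}-w_k=t_{k+1}(z_{k+1}-\bar z_k)+\eta(z_{k+1}-z_k)$.

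\textbf{Step 3: cancel the skew terms.} The skew contribution is $-t_{k+1}\ip{S\bigl((z_{k+1}-z^*)+a_k(z_{k+1}-z_k)\bigr)}{w_{k+1}-\eta(z_{k+1}-z_k)+\eta(\bar z_k-z^*)\ldots}$. With $a_k=(t_{k+1}-\eta)/\eta$ one has $\eta\bigl[(z_{k+1}-z^*)+a_k(z_{k+1}-z_k)\bigr]=w_{k+1}$ up to the term $(t_k-1)$ versus $t_{k+1}-\eta$ indexing. I expect the choice of $a_k$ to make the skew vector proportional to the very vector it is paired with, so that $\ip{Sw}{w}=0$ kills it. This exact cancellation is the main obstacle, and I would check it first, carefully tracking where $z_{k+1}$ versus $\bar z_k$ appears. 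The $D$-terms are then handled by the polarization identity $2\ip{D(a-b)}{a}=\|a\|_D^2-\|b\|_D^2+\|a-b\|_D^2$. This should produce
\[
\cE_{k+1}^{z^*}-\cE_k^{z^*}\le (t_{k+1}^2-\eta t_{k+1}-t_k^2)\cG_k-\tfrac{\eta(1-\eta)}{2}\ldots-\tfrac{c}{2}(\ldots)\|z_{k+1}-z_k\|_D^2 .
\]
The $\eta(1-\eta)$ term in $B_k$ absorbs the cross term $\eta(1-\eta)\ip{D(z_{k+1}-z_k)}{z_{k+1}-z^*}$.

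\textbf{Step 4: conclude.} By \eqref{ass:tk}, $t_{k+1}^2-t_k^2-\eta t_{k+1}\le(\rho-\eta)t_{k+1}\le0$, so $\cE_k^{z^*}$ is nonincreasing and its limit exists. When $\rho<\eta<1$ the decrease is at least $(\eta-\rho)t_{k+1}\cG_k+c\,t_{k+1}\|z_{k+1}-z_k\|_D^2$ for some $c>0$ proportional to $(1-\eta)$ or $(\eta-\rho)$. Telescoping and using $\cE_k^{z^*}\ge0$ then gives both summability statements.
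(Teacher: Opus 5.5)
Your route is the paper's: descent lemma at $\bar z_k$, convexity tested against $z_k$ and $z^*$, substitution of \eqref{alg:coupZ}, skew cancellation, then polarization. However, the decisive cancellation is left as ``I expect'', and the formulas you wrote would make it fail. In Step 1, testing convexity at $\bar z_k$ against $z_k$ produces $\ip{\nabla h(\bar z_k)}{z_{k+1}-z_k}$, not $\ip{\nabla h(\bar z_k)}{z_{k+1}-\bar z_k}$. The correct one-step inequality is
\[
\begin{aligned}
\cG_{z^*}(z_{k+1})\le{}&\Bigl(1-\tfrac{\eta}{t_{k+1}}\Bigr)\cG_{z^*}(z_k)+\tfrac12\|z_{k+1}-\bar z_k\|_D^2\\
&+\ip{\nabla h(\bar z_k)-\nabla h(z^*)}{z_{k+1}-z_k+\tfrac{\eta}{t_{k+1}}(z_k-z^*)}.
\end{aligned}
\]
Your version, with $\bar z_k$ in place of $z_k$, is false in general. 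For $h=\frac L2\|\cdot\|^2$, $D=LI$, $z^*=0$ and $\lambda=\eta/t_{k+1}$, it reduces to $(1-\lambda)\|z_k\|^2\ge(1-2\lambda)\|\bar z_k\|^2$, which is violated once $\lambda<\frac12$ and $\|\bar z_k\|$ is large enough relative to $\|z_k\|$.

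Worse, it breaks the cancellation. Set $\xi:=z_{k+1}-z^*+a_k(z_{k+1}-z_k)$. Then $t_{k+1}(z_{k+1}-\bar z_k)+\eta(\bar z_k-z^*)=\eta\xi-(t_{k+1}-\eta)(\bar z_k-z_k)$. After inserting $\nabla h(\bar z_k)-\nabla h(z^*)=-D(z_{k+1}-\bar z_k)-S\xi$, a term $t_{k+1}(t_{k+1}-\eta)\ip{S\xi}{\bar z_k-z_k}$ survives, and it has no sign. Your increment formula is also off: $w_{k+1}^{z^*}-w_k^{z^*}=t_{k+1}(z_{k+1}-\bar z_k)-(1-\eta)(z_{k+1}-z_k)$, not $+\eta(z_{k+1}-z_k)$.

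With the correct vector the proof closes exactly. One has $t_{k+1}(z_{k+1}-z_k)+\eta(z_k-z^*)=\eta(z_{k+1}-z^*)+(t_{k+1}-\eta)(z_{k+1}-z_k)=\eta\xi$. This is precisely $\eta$ times the argument of $S$, so $\ip{S\xi}{\xi}=0$ removes the skew part with no indexing mismatch. Using $\eta\xi=w_{k+1}^{z^*}+(1-\eta)(z_{k+1}-z_k)$ together with the correct increment, polarization gives the identity
\[
\begin{aligned}
&-\eta t_{k+1}\ip{D(z_{k+1}-\bar z_k)}{\xi}+\tfrac{t_{k+1}^2}{2}\|z_{k+1}-\bar z_k\|_D^2\\
&\qquad=-\bigl(B_{k+1}^{z^*}-B_k^{z^*}\bigr)-(1-\eta)\bigl(t_{k+1}-\tfrac12\bigr)\|z_{k+1}-z_k\|_D^2 .
\end{aligned}
\]
Hence $\cE_{k+1}^{z^*}-\cE_k^{z^*}\le(t_{k+1}^2-t_k^2-\eta t_{k+1})\cG_{z^*}(z_k)-(1-\eta)(t_{k+1}-\frac12)\|z_{k+1}-z_k\|_D^2$. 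Your Step 4 then works with $c=(1-\eta)/2$, since $t_{k+1}\ge1$. This is the paper's computation in the opposite order: the paper first expands $B_{k+1}^{z^*}-B_k^{z^*}$ and then inserts the two gap inequalities (test points $z^*$ and $z_k$), weighted by $\eta t_{k+1}$ and $t_{k+1}(t_{k+1}-\eta)$.
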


\begin{proof}
From the definitions of $\bar z_k$ and $w_k^{z^*}$, we have
\begin{eqnarray}\label{eq:wdiff}
	  w_{k+1}^{z^*}-w_k^{z^*}
	  &=&(t_{k+1}+\eta-1)(z_{k+1}-z_k)-(t_k-1)(z_k-z_{k-1})\notag\\
	  &=&t_{k+1}\left(z_{k+1}-z_k-\frac{t_k-1}{t_{k+1}}(z_k-z_{k-1})\right)
	  -(1-\eta)(z_{k+1}-z_k)\notag\\
	  &=& t_{k+1}(z_{k+1}-\bar z_k)
    -(1-\eta)(z_{k+1}-z_k).
\end{eqnarray}
Hence
\begin{align*} 
 \frac12\|w_{k+1}^{z^*}-w_k^{z^*}\|_D^2
={}&
\frac{t_{k+1}^2}{2}\|z_{k+1}-\bar z_k\|_D^2
+\frac{(1-\eta)^2}{2}\|z_{k+1}-z_k\|_D^2
\\
&-
(1-\eta)t_{k+1}
\left\langle
D(z_{k+1}-z_k),z_{k+1}-\bar z_k
\right\rangle .
\end{align*}
Using the identity
$
    \frac12\|a\|_D^2-\frac12\|b\|_D^2
    =
    \ip{Da}{a-b}-\frac12\|a-b\|_D^2,
$
and applying it to \eqref{def:B-disc} and \eqref{eq:wdiff}, we obtain
\begin{equation}\label{eq:Bdiff} 
  \begin{aligned}
 B_{k+1}^{z^*}- B_k^{z^*}
&=\ip{Dw_{k+1}^{z^*}}{w_{k+1}^{z^*}-w_k^{z^*}}
-\frac12\|w_{k+1}^{z^*}-w_k^{z^*}\|_D^2\\
&\quad
+ \eta(1-\eta)\ip{D(z_{k+1}-z^*)}{z_{k+1}-z_k}
-\frac{\eta(1-\eta)}{2}\|z_{k+1}-z_k\|_D^2 \\
&=
\eta t_{k+1}\ip{D(z_{k+1}-z^*)}{z_{k+1}-\bar z_k}\\
&\quad
+t_{k+1}(t_{k+1}-\eta)
\ip{D(z_{k+1}-z_k)}{z_{k+1}-\bar z_k}\\
&\quad
-(1-\eta)\left(t_{k+1}-\frac{1}{2}\right)
\|z_{k+1}-z_k\|_D^2
-\frac{t_{k+1}^2}{2}\|z_{k+1}-\bar z_k\|_D^2 .
\end{aligned}
\end{equation}
Since $z^*\in\cS$, we have $\nabla h(z^*)+Sz^*=0$. Subtracting this identity from \eqref{alg:coupZ} yields
\[
    D(z_{k+1}-\bar z_k)
    +\nabla h(\bar z_k)-\nabla h(z^*)
    +S\bigl(z_{k+1}-z^*+a_k(z_{k+1}-z_k)\bigr)=0.
\]
Since $\eta a_k=t_{k+1}-\eta$, the first two inner products in \eqref{eq:Bdiff} can be combined as
\[
\begin{aligned}
&\eta t_{k+1}\ip{D(z_{k+1}-z^*)}{z_{k+1}-\bar z_k}
+t_{k+1}(t_{k+1}-\eta)
\ip{D(z_{k+1}-z_k)}{z_{k+1}-\bar z_k}\\
&=
\eta t_{k+1}
\ip{z_{k+1}-z^*+\alpha_k(z_{k+1}-z_k)}
     {D(z_{k+1}-\bar z_k)}\\
&=
-\eta t_{k+1}
\ip{z_{k+1}-z^*+\alpha_k(z_{k+1}-z_k)}
     {\nabla h(\bar z_k)-\nabla h(z^*)},
\end{aligned}
\]
where the term involving $S$ vanishes by skew-symmetry:
$
    \ip{S\xi}{\xi}=0$ with
    $\xi=z_{k+1}-z^*+a_k(z_{k+1}-z_k).
$
Therefore,
\begin{equation}\label{eq:Bbefore}
\begin{aligned}
B_{k+1}^{z^*}-B_k^{z^*}
&=
-\eta t_{k+1}
\ip{z_{k+1}-z^*}{\nabla h(\bar z_k)-\nabla h(z^*)}\\
&\quad
-t_{k+1}(t_{k+1}-\eta)
\ip{z_{k+1}-z_k}{\nabla h(\bar z_k)-\nabla h(z^*)}\\
&\quad
-(1-\eta)\left(t_{k+1}-\frac12\right)
\|z_{k+1}-z_k\|_D^2
-\frac{t_{k+1}^2}{2}\|z_{k+1}-\bar z_k\|_D^2.
\end{aligned}
\end{equation}
By the definition of $\cG_{z^*}$ in \eqref{eq:gap} and \Cref{lem:D-smooth}(i) with $u=z_{k+1}$ and $v=\bar z_k$, for any $z\in\R^n\times\R^m$ we have
\[\begin{aligned}
&\cG_{z^*}(z_{k+1})
=h(z_{k+1})-h(z^*)-\ip{\nabla h(z^*)}{z_{k+1}-z^*}\\
&\quad\le h(\bar z_k)-h(z^*)-\ip{\nabla h(z^*)}{z_{k+1}-z^*}
+\ip{\nabla h(\bar z_k)}{z_{k+1}-\bar z_k}
+\frac12\|z_{k+1}-\bar z_k\|_D^2\\
&\quad\le h(z)-h(z^*)-\ip{\nabla h(z^*)}{z_{k+1}-z^*}
+\ip{\nabla h(\bar z_k)}{z_{k+1}-z}
+\frac12\|z_{k+1}-\bar z_k\|_D^2\\
&\quad=
\cG_{z^*}(z)
+\ip{\nabla h(\bar z_k)-\nabla h(z^*)}{z_{k+1}-z}
+\frac12\|z_{k+1}-\bar z_k\|_D^2.
\end{aligned}
\]
Here the second inequality follows from the convexity of $h$, which gives
\[
    h(\bar z_k)\le h(z)+\ip{\nabla h(\bar z_k)}{\bar z_k-z},
    \qquad \forall z\in\R^n\times\R^m.
\] 
 Taking $z=z^*$ and $z=z_k$, respectively, and using $\cG_{z^*}(z^*)=0$, we obtain
\[
    \ip{z_{k+1}-z^*}{\nabla h(\bar z_k)-\nabla h(z^*)}
    \ge
    \cG_{z^*}(z_{k+1})
    -\frac12\|z_{k+1}-\bar z_k\|_D^2
\]
and
\[
    \ip{z_{k+1}-z_k}{\nabla h(\bar z_k)-\nabla h(z^*)}
    \ge
    \cG_{z^*}(z_{k+1})-\cG_{z^*}(z_k)
    -\frac12\|z_{k+1}-\bar z_k\|_D^2.
\]
Substituting these two inequalities into \eqref{eq:Bbefore} and using
$
    \eta t_{k+1}+t_{k+1}(t_{k+1}-\eta)=t_{k+1}^2,
$ 
we get
\[
\begin{aligned}
B_{k+1}^{z^*}-B_k^{z^*}
&\le
-\eta t_{k+1}\cG_{z^*}(z_{k+1})
-t_{k+1}(t_{k+1}-\eta)
\bigl(\cG_{z^*}(z_{k+1})-\cG_{z^*}(z_k)\bigr)\\
&\quad
-(1-\eta)\left(t_{k+1}-\frac12\right)
\|z_{k+1}-z_k\|_D^2.
\end{aligned}
\]
Therefore, from \eqref{def:E-disc} and \eqref{ass:tk}, we have
\begin{equation}\label{eq_decE}
\begin{aligned}
&\cE_{k+1}^{z^*}-\cE_k^{z^*}
= t_{k+1}^2\cG_{z^*}(z_{k+1})
-t_k^2\cG_{z^*}(z_k)+B_{k+1}^{z^*}-B_k^{z^*}\\
&\quad\le
\bigl(t_{k+1}^2-t_k^2-\eta t_{k+1}\bigr)\cG_{z^*}(z_k)
-(1-\eta)\left(t_{k+1}-\frac12\right)
\|z_{k+1}-z_k\|_D^2\\
&\quad\le
(\rho-\eta) t_{k+1}\cG_{z^*}(z_k)
-(1-\eta)\left(t_{k+1}-\frac12\right)   
\|z_{k+1}-z_k\|_D^2.
\end{aligned}
\end{equation}
Since $\rho\le \eta\le 1$ and $t_{k+1}\ge1$, it follows that
\[
    \cE_{k+1}^{z^*}-\cE_k^{z^*}\le 0,
    \qquad \forall k\ge1.
\]
Since $\cE_k^{z^*}\ge0$, then the sequence $\{\cE_k^{z^*}\}$ is nonnegative and nonincreasing. Consequently, $\lim_{k\to+\infty}\cE_k^{z^*}$ exists.

If $\rho<\eta<1$, summing \eqref{eq_decE} over $k\ge1$ and using the boundedness from below of $\{\cE_k^{z^*}\}$ give
\[
    \sum_{k=1}^{+\infty}t_{k+1}\cG_{z^*}(z_k)<+\infty,
    \qquad
    \sum_{k=1}^{+\infty}t_{k+1}\|z_{k+1}-z_k\|_D^2<+\infty.
\]
This completes the proof.
\end{proof}

   We now use the discrete energy estimate to establish the convergence rate, the convergence of the  generated sequence, and the improved asymptotic rates in the noncritical regime.

 \begin{theorem}\label{thm:disc-main}
Let $\{z_k\}=\{(x_k,y_k)\}$ be the sequence generated by \Cref{alg:fast-pd}. Then the following statements hold:
\begin{enumerate}
\item[(i)] For every $(x^*,y^*)\in\cS$,
\[
\cL(x_k,y^*)-\cL(x^*,y_k)=\bigO\left(\frac1{t_k^2}\right),
\qquad
\|(x_k,y_k)-(x_{k-1},y_{k-1})\|=\bigO\left(\frac1{t_k}\right).
\]

\item[(ii)] There exists $(x^*,y^*)\in\cS$ such that $(x_k,y_k)$ converges to $(x^*,y^*)$ as $k\to+\infty$.

\item[(iii)] If $\rho<\eta<1$, then the following improved rates hold:
\begin{itemize}
	\item[(a)] Improved convergence rates for the primal-dual gap and the velocity:
 \[
    \cL(x_k,y^*)-\cL(x^*,y_k)
    =
    o\left(\frac1{t_k^2}\right),
    \qquad
  \|(x_k,y_k)-(x_{k-1},y_{k-1})\|
    =
    o\left(\frac1{t_k}\right).
\]
\item[(b)] Improved convergence rate for the stationarity residual:
\[
\|F(z_k)\|
=
\|\nabla h(z_k)+Sz_k\|
=
\left\|\begin{pmatrix}
        \nabla f(x_k)+K^\top y_k\\
        \nabla g(y_k)-Kx_k
    \end{pmatrix}\right\|
    =
    o\left(\frac1{t_k}\right).
\]
\end{itemize}
\end{enumerate}
\end{theorem}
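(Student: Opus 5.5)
We follow the continuous-time analysis, with \Cref{lem:disc} playing the role of \Cref{thm:cont-est}.

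\textbf{Part (i).} Monotonicity of $\cE_k^{z^*}$ gives $t_k^2\cG_{z^*}(z_k)\le\cE_1^{z^*}$ and $\frac12\|w_k^{z^*}\|_D^2\le\cE_1^{z^*}$. Since $z_1=z_0$, \Cref{le_wbound} applies with $a_k=t_k-1$. It yields boundedness of $\|z_k-z^*\|_D$ and $(t_k-1)\|z_k-z_{k-1}\|_D\le 2C$. Combined with $\|z_k-z_{k-1}\|_D\le C/\eta\cdot 2$ (a bounded sequence), this gives $\|z_k-z_{k-1}\|=\bigO(1/t_k)$, because $D$ is positive definite.

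\textbf{Part (ii).} We first show every cluster point lies in $\cS$. Exactly as in \Cref{lem:cont-cluster}, $\cG_{\hat z}(z_k)\to0$ forces $\nabla h(\bar z)=\nabla h(\hat z)$ at every cluster point $\bar z$, so $a_k':=\nabla h(z_k)-\nabla h(\hat z)\to0$. Lipschitz continuity and $\bar z_k-z_k\to0$ then give $\nabla h(\bar z_k)-\nabla h(\hat z)\to0$ as well.

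For the skew part, set $q_k:=S(z_k-\hat z)$. Then \eqref{alg:coupZ} reads
\[
(1+a_k)q_{k+1}-a_kq_k=-\bigl(D(z_{k+1}-\bar z_k)+\nabla h(\bar z_k)-\nabla h(\hat z)\bigr)=:-e_k,
\]
with $e_k\to0$, since $z_{k+1}-\bar z_k\to0$ by part (i). Rewriting,
\[
q_{k+1}=\Bigl(1-\tfrac1{1+a_k}\Bigr)q_k-\tfrac{1}{1+a_k}e_k,
\]
where $\frac1{1+a_k}=\frac{\eta}{t_{k+1}}$. Applying \Cref{le_bertsekas} to $\omega_k=\|q_k\|$ with $\sigma_k=\eta/t_{k+1}$ and $q_k$-term $\|e_k\|/\eta$ requires $\sum 1/t_{k+1}=\infty$, which is \eqref{eq:tkinf}. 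Hence $q_k\to0$, so $F(z_k)\to0$ and every cluster point lies in $\cS$.

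For uniqueness, take two cluster points $z_a,z_b\in\cS$. By \Cref{prop:str}(iii) the gap terms cancel in $\cE_k^{z_b}-\cE_k^{z_a}$. Expanding $w_k$, the difference equals
\[
\eta\,\bigl[(t_k-1)(H_k-H_{k-1})+H_k\bigr]+o(1),
\]
where $H_k:=\frac12(\|z_k-z_b\|_D^2-\|z_k-z_a\|_D^2)$. Here we use that the extra quadratic pieces, namely $\|z_k-z_{k-1}\|_D^2$ times bounded multiples of $t_k$, are lower order or telescope. This needs care. The cleanest route is to verify directly that
\[
R_k:=t_kH_k-(t_k-1)H_{k-1}
\]
has a limit, and then derive that $H_k$ has a limit by a discrete Ces\`aro/averaging argument, for instance via \Cref{le_bertsekas} applied to $|H_k-\lim R|$ with $\sigma_k=1/t_k$. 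Evaluating along the two subsequences gives $\|z_a-z_b\|_D^2=-\|z_a-z_b\|_D^2$, so $z_a=z_b$. This Opial-type step is the main obstacle, in particular checking that the cross terms of order $t_k\|z_k-z_{k-1}\|^2$ converge rather than merely stay bounded.

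\textbf{Part (iii)(a).} Once $z_k\to z^*$, the energy $\cE_k^{z^*}$ equals $t_k^2\cG_{z^*}(z_k)+\frac12(t_k-1)^2\|z_k-z_{k-1}\|_D^2+o(1)$, because the cross term is $\bigO(1)\cdot\|z_k-z^*\|\to0$. Hence this quantity has a limit $\ell$. If $\ell>0$, then $t_{k+1}\cG_{z^*}(z_k)+t_{k+1}\|z_{k+1}-z_k\|^2\gtrsim \ell/t_k$, which is not summable by \eqref{eq:tkinf}. This contradicts \Cref{lem:disc} after an index shift, using $t_{k+1}\le t_k+1$. So $\ell=0$.

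\textbf{Part (iii)(b).} \Cref{lem:D-smooth}(ii) gives $t_k\|\nabla h(z_k)-\nabla h(z^*)\|\to0$, and similarly $t_k\|e_k\|\to0$. Multiplying the $q$-recursion by $t_{k+1}$ yields
\[
t_{k+1}q_{k+1}=(t_{k+1}-\eta)q_k-\eta e_k.
\]
Since $t_{k+1}-\eta\le t_k+\rho-\eta\le t_k$ in magnitude terms, we get
\[
\|t_{k+1}q_{k+1}\|\le(1-\eta/t_{k+1})\|t_kq_k\|\cdot\tfrac{t_{k+1}-\eta}{t_{k+1}-\eta}+\ldots
\]
Cleaning this up so that it reads $\omega_{k+1}\le(1-\sigma_k)\omega_k+\sigma_k\cdot o(1)$ with $\omega_k=t_k\|q_k\|$ and $\sigma_k\approx(\eta-\rho)/t_{k+1}$ uses $\rho<\eta$, and \Cref{le_bertsekas} then gives $t_k\|q_k\|\to0$. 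Therefore $\|F(z_k)\|=o(1/t_k)$.
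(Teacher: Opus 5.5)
Your plan follows the paper's proof essentially line by line:
\begin{itemize}
\item the energy bound with \Cref{le_wbound} for (i);
\item the recursion $q_{k+1}=(1-\eta/t_{k+1})q_k-(\eta/t_{k+1})e_k$ with \Cref{le_bertsekas} to place all cluster points in $\cS$;
\item the relation $R_k=t_kH_k-(t_k-1)H_{k-1}$ and \Cref{le_bertsekas} with $\sigma_k=1/t_k$ for uniqueness;
\item the contradiction between $\sum_k A_k/t_k<+\infty$ and \eqref{eq:tkinf} for (iii)(a);
\item the rescaled recursion for $t_kq_k$ for (iii)(b).
\end{itemize}

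The only step you leave open is the one you call the main obstacle, and it is not actually an obstacle. Expand $\frac12\|w_k^{z^*}\|_D^2$ and use $\frac{\eta^2}{2}+\frac{\eta(1-\eta)}{2}=\frac{\eta}{2}$ to get
\[
\cE_k^{z^*}=t_k^2\cG_{z^*}(z_k)+\frac{\eta}{2}\|z_k-z^*\|_D^2+\eta(t_k-1)\ip{D(z_k-z^*)}{z_k-z_{k-1}}+\frac{(t_k-1)^2}{2}\|z_k-z_{k-1}\|_D^2 .
\]
The last term does not involve $z^*$, and the gap terms coincide by \Cref{prop:str}(iii). So both cancel identically in the difference. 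Since $H_k-H_{k-1}=\ip{D(z_a-z_b)}{z_k-z_{k-1}}$, you get the exact identity
\[
\frac1\eta\bigl(\cE_k^{z_b}-\cE_k^{z_a}\bigr)=H_k+(t_k-1)(H_k-H_{k-1})=R_k ,
\]
with no $o(1)$ remainder. Hence $R_k$ converges simply because both energies converge by \Cref{lem:disc}. This also works in the critical case $\eta=\rho$, and it is the paper's \eqref{eq_RR} with $z_a,z_b$ interchanged.

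Your diagnosis of the difficulty was also misplaced. Terms like $t_k\|z_k-z_{k-1}\|_D^2$ would be harmless, since they are $\bigO(1/t_k)$ by (i). The genuinely dangerous term is $\frac{(t_k-1)^2}{2}\|z_k-z_{k-1}\|_D^2$, which is only known to be bounded when $\eta=\rho$, and that is exactly the term that cancels.

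There are also two small slips.
\begin{itemize}
\item In (ii), the forcing term for \Cref{le_bertsekas} is $\|e_k\|$, not $\|e_k\|/\eta$, because the coefficient of $e_k$ is exactly $\sigma_k=\eta/t_{k+1}$. This is harmless.
\item In (iii)(b), the inequality you were after is
\[
t_{k+1}\|q_{k+1}\|\le\frac{t_{k+1}-\eta}{t_k}\,t_k\|q_k\|+\eta\|e_k\|\le\Bigl(1-\frac{\eta-\rho}{t_k}\Bigr)t_k\|q_k\|+\eta\|e_k\|,
\]
which uses $t_{k+1}\ge1>\eta$ and $t_{k+1}-t_k\le\rho$. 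Then \Cref{le_bertsekas} with $\sigma_k=(\eta-\rho)/t_k\in(0,1]$ and forcing term $\eta t_k\|e_k\|/(\eta-\rho)\to0$ gives $t_k\|q_k\|\to0$. This is the paper's argument, which takes $\sigma_k=1-\frac{t_{k+1}-\eta}{t_k}$ and bounds it below by $(\eta-\rho)/t_k$.
\end{itemize}
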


\begin{proof}
  
 Let $z^*\in\cS$ be fixed. By \Cref{lem:disc}, $\{\cE_k^{z^*}\}$ is nonincreasing and nonnegative. Hence
$
    \cE_k^{z^*}\le \cE_1^{z^*}.
$
Together with the definition of $\cE_k^{z^*}$, this gives
\[
    t_k^2\cG_{z^*}(z_k)
    =
    t_k^2(\cL(x_k,y^*)-\cL(x^*,y_k))
    \le \cE_1^{z^*},
\]
and
\[
\|w_k^{z^*}\|_D
=
\|\eta(z_k-z^*)+(t_k-1)(z_k-z_{k-1})\|_D
\le \sqrt{2\cE_1^{z^*}}.
\]
By \Cref{le_wbound} and the positive definiteness of $D$, we obtain (i). In particular, the sequence $\{z_k\}=\{(x_k,y_k)\}$ is bounded.

 We next prove (ii). We first show that every cluster point is a saddle point. Fix $\hat z=(\hat x,\hat y)\in\cS$. From (i), we have
\[
    \cG_{\hat z}(z_k)=\cL(x_k,\hat y)-\cL(\hat x,y_k)\to0.
\]
Let $\bar z$ be any cluster point of $\{z_k\}$, and let $\{z_{k_j}\}$ be a subsequence such that $z_{k_j}\to\bar z$. Then, by an argument similar to that in \Cref{lem:cont-cluster}, we have $\nabla h(\bar z)=\nabla h(\hat z)$.

Since $\{z_k\}$ is bounded and $\nabla h$ is continuous, the preceding argument shows that all cluster points of $\{\nabla h(z_k)\}$ are equal to $\nabla h(\hat z)$. Hence
\begin{equation}\label{eq_hzhz}
	    \nabla h(z_k)\to\nabla h(\hat z).
\end{equation}

Moreover, by the definition of $\bar z_k$ and (i), we have
\begin{equation}\label{eq_diff_z}
	\|\bar z_k-z_k\|
	\le \|z_k-z_{k-1}\|\to 0.
\end{equation}
Using  \Cref{ass:disc}, \eqref{eq_hzhz}, and \eqref{eq_diff_z}, we obtain
\begin{equation}\label{eq:gradbar}
\begin{aligned}
	  \|\nabla h(\bar z_k)-\nabla h(\hat z)\|
	  &\leq \|\nabla h(z_k)-\nabla h(\bar z_k)\|
	  + \|\nabla h(z_k)-\nabla h(\hat z)\|\\
	  &\le \max\{L_f,L_g\}\|\bar z_k-z_k\|
	  +\|\nabla h(z_k)-\nabla h(\hat z)\|\\
	  &\to 0.
\end{aligned}
\end{equation}
Thus
\begin{equation}\label{eq_nh_to0}
	\nabla h(\bar z_k)\to\nabla h(\hat z).
\end{equation}

Set
\[
    q_k:=S(z_k-\hat z).
\]
Since $\hat z\in\cS$, we have
$
    F(\hat z)=\nabla h(\hat z)+S\hat z=0.
$
Subtracting this identity from \eqref{alg:coupZ} and using $a_k=\frac{t_{k+1}-\eta}{\eta}$, we obtain
\[
\begin{aligned}
0
&=
D(z_{k+1}-\bar z_k)
+\nabla h(\bar z_k)-\nabla h(\hat z)
+S\bigl(z_{k+1}-\hat z+a_k(z_{k+1}-z_k)\bigr)\\
&=
D(z_{k+1}-\bar z_k)
+\nabla h(\bar z_k)-\nabla h(\hat z)
+\frac{t_{k+1}}{\eta}q_{k+1}
-\frac{t_{k+1}-\eta}{\eta}q_k .
\end{aligned}
\]
It follows that
\begin{equation}\label{eq:qrec}
    q_{k+1}
    =
    \left(1-\frac{\eta}{t_{k+1}}\right)q_k
    -\frac{\eta}{t_{k+1}}
    \left[
    D(z_{k+1}-\bar z_k)+\nabla h(\bar z_k)-\nabla h(\hat z)
    \right].
\end{equation} 
  
 By (i) and \eqref{eq_diff_z}, we have
\[
    z_{k+1}-\bar z_k
    =
    (z_{k+1}-z_k)-(\bar z_k-z_k)\to0.
\]
Therefore
$
    D(z_{k+1}-\bar z_k)\to0.
$
Together with \eqref{eq_nh_to0}, this gives
\[
	D(z_{k+1}-\bar z_k)+\nabla h(\bar z_k)-\nabla h(\hat z)\to0.
\]
Since $\eta\in[\rho,1]$, we have $\eta/t_{k+1}\in(0,1]$. By \eqref{eq:tkinf},
$
    \sum_{k=1}^{+\infty}\frac{\eta}{t_{k+1}}=+\infty.
$
Hence \Cref{le_bertsekas} applied to \eqref{eq:qrec} gives
$
    q_k=S(z_k-\hat z)\to0.
$
Combining this with \eqref{eq_hzhz} and $F(\hat z)=\nabla h(\hat z)+S\hat z=0$, we obtain
\[
F(z_k)
=
\nabla h(z_k)-\nabla h(\hat z)+S(z_k-\hat z)
\to 0.\]
Thus, if $\bar z$ is any cluster point of $\{z_k\}$ and $z_{k_j}\to\bar z$, then the continuity of $F$ gives
\[
    F(\bar z)=\lim_{j\to+\infty}F(z_{k_j})=0.
\]
Therefore $\bar z\in\cS$. Hence every cluster point of $\{z_k\}$ belongs to $\cS$.

We now prove that the cluster point is unique. Let $z_a,z_b\in\cS$ be two cluster points. By \Cref{prop:str},
\[
    \cG_{z_a}(z)=\cG_{z_b}(z),
    \qquad \forall z\in\R^n\times\R^m.
\]
Thus the primal-dual gap terms in $\cE_k^{z_a}$ and $\cE_k^{z_b}$ are identical. Since both energy sequences are nonincreasing and bounded from below, the sequence
\begin{equation}\label{eq_RR}
	R_k:=\frac1\eta\left(\cE_k^{z_a}-\cE_k^{z_b}\right)
	  =
	  \frac{1}{2}(\|z_k-z_a\|_D^2-\|z_k-z_b\|_D^2)
	  +(t_k-1)\ip{D(z_b-z_a)}{z_k-z_{k-1}}
\end{equation}
has a finite limit, denoted by $R^{\infty}$.

Define
\[
    H_k:=
    \frac12\|z_k-z_a\|_D^2
    -
    \frac12\|z_k-z_b\|_D^2
    =
    \ip{D(z_b-z_a)}{z_k}
    +\frac{1}{2}(\|z_a\|_D^2-\|z_b\|_D^2).
\]
Then, from \eqref{eq_RR}, we have
$
    R_k=H_k+(t_k-1)(H_k-H_{k-1}).
$
Since $t_k\ge1$, it follows that
\[
|H_k-R^{\infty}|
\le
\frac{t_k-1}{t_k}|H_{k-1}-R^{\infty}|
+\frac{1}{t_k}|R_k-R^{\infty}|.
\]
By \eqref{eq:tkinf} and \Cref{le_bertsekas}, we conclude that
$
    \lim_{k\to+\infty}H_k=R^{\infty}.
$
 Now take subsequences $z_{k_j}\to z_a$ and $z_{\ell_j}\to z_b$. Then
\[
    H_{k_j}\to -\frac12\|z_a-z_b\|_D^2,
    \qquad
    H_{\ell_j}\to \frac12\|z_a-z_b\|_D^2.
\]
Since $H_k$ has a unique limit, these two limits must be equal. Therefore $z_a=z_b$. Thus the bounded sequence $\{z_k\}$ has a unique cluster point, denoted by $z^*=(x^*,y^*)\in\cS$. Consequently,
$
    z_k=(x_k,y_k)\to z^*=(x^*,y^*)
$
as $k\to+\infty$. This proves (ii).

We finally prove (iii). Let $z_k\to z^*$. Since $\rho<\eta<1$, \Cref{lem:disc} implies that $\cE_k^{z^*}$ has a finite limit. Define
\begin{equation}\label{eq_difA}
	   A_k:=
    t_k^2\cG_{z^*}(z_k)
    +
    \frac{(t_k-1)^2}{2}\|z_k-z_{k-1}\|_D^2.
\end{equation}
Expanding $\cE_k^{z^*}$ gives
\begin{eqnarray}\label{eq_limA}
	\lim_{k\to+\infty}\cE_k^{z^*}
	&=&
	\lim_{k\to+\infty}
	\left(
	A_k+\frac{\eta}{2}\|z_k-z^*\|_D^2
	+\eta(t_k-1)\ip{D(z_k-z^*)}{z_k-z_{k-1}}
	\right)\nonumber\\
	&=&
	\lim_{k\to+\infty} A_k,
\end{eqnarray}
where the last equality follows from (i) and (ii), since $z_k\to z^*$ and $(t_k-1)\|z_k-z_{k-1}\|_D$ is bounded. Furthermore, by \Cref{lem:disc} and the monotonicity of $\{t_k\}$, we have
\begin{equation}\label{eq_AK}
	 \sum_{k=1}^{+\infty}\frac{A_k}{t_k}<+\infty .
\end{equation}
From \eqref{eq_difA} and \eqref{eq_limA}, there exists $\ell\ge0$ such that
$
    A_k\to\ell.
$
If $\ell>0$, then there exists $K\ge1$ such that $A_k\ge\ell/2$ for all $k\ge K$. Hence
\[
    \sum_{k=K}^{+\infty}\frac{A_k}{t_k}
    \ge
    \frac{\ell}{2} \sum_{k=K}^{+\infty}\frac{1}{t_k}
    =
    +\infty,
\]
which contradicts \eqref{eq_AK}. Therefore $\ell=0$. By the definition of $A_k$ and the positive definiteness of $D$, we obtain
\[
	  t_k^2\cG_{z^*}(z_k)\to0,
    \qquad
    t_k\|z_k-z_{k-1}\|\to0.
\]
This proves (a).

It remains to prove the stationarity residual estimate. By \Cref{lem:D-smooth}(ii) with $u=z_k$ and $v=z^*$, and by (a), we have
\begin{equation}\label{eq_onzk}
	    \|\nabla h(z_k)-\nabla h(z^*)\|^2
    \le
    2\max\{L_f,L_g\}\cG_{z^*}(z_k)
    =
    o\left(\frac1{t_k^2}\right).
\end{equation}
Thus
$
    \|\nabla h(z_k)-\nabla h(z^*)\|=o(1/t_k).
$
Together with \eqref{eq_diff_z}, \eqref{eq:gradbar}, and (a), this gives
\begin{equation}\label{eq:grado}
	\|\nabla h(\bar z_k)-\nabla h(z^*)\|
	\le
	\max\{L_f,L_g\}\|\bar z_k-z_k\|
	+\|\nabla h(z_k)-\nabla h(z^*)\|
	=
	o\left(\frac1{t_k}\right).
\end{equation}

Using the same argument as in the proof of (ii), with $\hat z=z^*$, we have
\begin{equation}\label{eq_tqk}
	 t_{k+1}q_{k+1}
    =
    \frac{t_{k+1}-\eta}{t_k}\,t_kq_k-\eta r_k,
\end{equation}
where
\[
    q_k:=S(z_k-z^*),
    \qquad
    r_k:=
    D(z_{k+1}-\bar z_k)+\nabla h(\bar z_k)-\nabla h(z^*).
\]
From \eqref{eq:grado} and (a), we have
\begin{equation}\label{eq_ork}
	\|r_k\|= o\left(\frac1{t_k}\right).
\end{equation}

Let $
    \sigma_k:=1-\frac{t_{k+1}-\eta}{t_k}
    =
    \frac{\eta-(t_{k+1}-t_k)}{t_k}.
$
Since
$
    t_{k+1}-t_k
    =
    \frac{t_{k+1}^2-t_k^2}{t_{k+1}+t_k}
    \le
    \frac{\rho t_{k+1}}{t_{k+1}+t_k}
    <\rho,
$
and $0<\rho<\eta<1$, we have $\sigma_k\in(0,1)$. Moreover, by \eqref{eq:tkinf},
\[
    \sum_{k=1}^{+\infty}\sigma_k
    \ge
    (\eta-\rho)\sum_{k=1}^{+\infty}\frac1{t_k}
    =
    +\infty,
\]
and by \eqref{eq_ork},
$
    \frac{\|r_k\|}{\sigma_k}
    \le
    \frac{t_k\|r_k\|}{\eta-\rho}
    \to0.
$
From \eqref{eq_tqk}, we obtain
\[
 \|t_{k+1}q_{k+1}\|
    \le
   (1-\sigma_k)\|t_kq_k\|+\eta\sigma_k\frac{\|r_k\|}{\sigma_k}.
\]
Applying \Cref{le_bertsekas} with $q_k$ replaced by $\eta\|r_k\|/\sigma_k$, we obtain
$
   \|t_kq_k\|
   =
   t_k\|S(z_k-z^*)\|
   \to0.
$
That is,
\[
	\|S(z_k-z^*)\|=o\left(\frac1{t_k}\right).
\]
Combining this with \eqref{eq_onzk} and $F(z^*)=\nabla h(z^*)+Sz^*=0$, we obtain
\[
\|F(z_k)\|
=
\|\nabla h(z_k)+Sz_k\|
\le
\|\nabla h(z_k)-\nabla h(z^*)\|
+\|S(z_k-z^*)\|=
o\left(\frac1{t_k}\right).
\]
This proves (b), and the proof is complete.
\end{proof}

\begin{remark}
For the classical Nesterov rule \cite{Nesterov83}
$
    t_{k+1}=\frac{1+\sqrt{1+4t_k^2}}{2},
$
the Chambolle-Dossal rule \cite{ChambolleFista}
$
    t_k=\frac{k+\alpha-2}{\alpha-1},
$
and the Attouch-Cabot rule \cite{Attouch18siam} $
    t_k=\frac{k-1}{\alpha-1},
$ the condition \eqref{ass:tk} is satisfied. Hence \Cref{thm:disc-main} gives the $\bigO(1/k^2)$ convergence rate for the primal-dual gap and convergence of the iterates. Moreover, for the Chambolle-Dossal rule and the  Attouch-Cabot rule, one can choose parameters such that \eqref{ass:tk} holds with $\rho<1$. In this case, the improved $o(1/k^2)$ convergence rate for the primal-dual gap and the $o(1/k)$ convergence rate for the stationarity residual also follow. These discrete convergence results are consistent with the asymptotic convergence properties of the continuous dynamic \eqref{dyn:cont-xy} established in \Cref{thm:cont-est} and \Cref{thm:cont-main}.

Compared with accelerated primal-dual methods in \cite{CondatPre2026,HeCNSNS,Tran22,ChangJSC}, where convergence of the iterates and $\bigO(1/k^2)$ rates are obtained under semi-strong convexity assumptions, the present result is established under the merely convex-concave assumption. Compared with the accelerated bilinear saddle algorithm in \cite{DingCOAP}, where the $\bigO(1/k^2)$ gap estimate is proved and convergence of the iterates is established only in the case $\rho<1$, \Cref{thm:disc-main} proves convergence under the critical parameter regime $\eta=\rho$ as well, and further gives the little-$o$ primal-dual gap estimate in the noncritical case. In addition, the stationarity residual is improved to
$
    \|F(z_k)\|=o(1/t_k),
$
whereas the residual estimate in \cite{DingCOAP} is of order $o(1/\sqrt{t_k})$.
\end{remark}

\section{Numerical experiments}\label{sec:numerics}

In this section, we report two numerical examples to illustrate the behavior of the proposed fast primal-dual algorithm. The first example is a smooth convex-concave quadratic saddle problem without strong convexity. It is used to test the convergence results in the merely convex-concave setting. The second example is an image deblurring problem with quadratic gradient regularization, which leads to a partially strongly convex saddle formulation.

\subsection{Quadratic saddle problem}\label{subsec:num-rank-deficient}

We first consider a rank-deficient quadratic bilinear saddle problem
\begin{equation}\label{eq:num-rank-saddle}
    \min_{x\in\R^n}\max_{y\in\R^m}
    \ \cL(x,y)
    :=
    \frac12\|Bx-b\|^2+\langle Kx,y\rangle
    -\frac12\|Cy-d\|^2,
\end{equation}
where $B\in\R^{p\times n}$, $C\in\R^{q\times m}$, $b\in \R^n$, $d\in \R^m$ and $K\in\R^{m\times n}$. In the experiment, we take $n=1000$, $m=1200$, $p=800$ and $q=1000$, so that $B$ and $C$ are rank deficient. The coupling matrix $K$ is chosen as a dense matrix with decaying singular values and normalized by $\|K\|=1$. Hence \eqref{eq:num-rank-saddle} is a special case of \eqref{prob:saddle} with
$f(x)=\frac12\|Bx-b\|^2$ and
$g(y)=\frac12\|Cy-d\|^2$. Both $f$ and $g$ are convex and differentiable, with Lipschitz constants $L_f=\|B\|^2$ and $L_g=\|C\|^2$.
Since $B^\top B$ and $C^\top C$ are singular in general, neither $f$ nor $g$ is strongly convex. Thus this example directly corresponds to the merely convex-concave setting of \Cref{thm:disc-main}. The  saddle point $(x^*,y^*)$ is computed from the linear KKT system
\[
    \begin{pmatrix}
        B^\top B & K^\top\\
        -K & C^\top C
    \end{pmatrix}
    \begin{pmatrix}
        x^*\\
        y^*
    \end{pmatrix}
    =
    \begin{pmatrix}
        B^\top b\\
        C^\top d
    \end{pmatrix}.
\]
We compare the proposed algorithm, denoted by AL1 in the figures, with FPDA3 in \cite{DingCOAP} and the Chambolle-Pock method for the convex case \cite{ChambolleJMIV}, denoted by CP. For AL1 and FPDA3, we use the same Nesterov-type sequence
\[
    t_k=\frac{k+\alpha-2}{\alpha-1},
    \qquad
    \alpha=30.
\]
Then $\rho=2/(\alpha-1)$. The parameter $\eta$ in AL1 is chosen as $0.95$, and therefore the method is tested in the noncritical regime. The stepsizes of AL1 are chosen close to their theoretical upper bounds. The parameters of FPDA3 are chosen according to the admissible range in \cite{DingCOAP}. For the CP method, the stepsizes satisfy $\tau\sigma\|K\|^2<1$.

The numerical performance is evaluated by the primal-dual gap
\[
    \cG_{z^*}(z_k)=\cL(x_k,y^*)-\cL(x^*,y_k),
\]
the stationarity residual
\[
    \|F(z_k)\|
    =
    \left\|
    \begin{pmatrix}
        B^\top(Bx_k-b)+K^\top y_k\\
        C^\top(Cy_k-d)-Kx_k
    \end{pmatrix}
    \right\|,
\] 
and the increment $\|z_{k+1}-z_k\|$.  To illustrate the little-$o$ estimates, we also plot the scaled quantities
\[
    t_k^2\cG_{z^*}(z_k),
    \qquad
    t_k\|F(z_k)\|,
    \qquad
    t_k\|z_{k+1}-z_k\|.
\]
According to \Cref{thm:disc-main}, these quantities should decay to zero for AL1 in the noncritical regime.

\begin{figure}[htbp]
    \centering
    \includegraphics[width=0.95\linewidth]{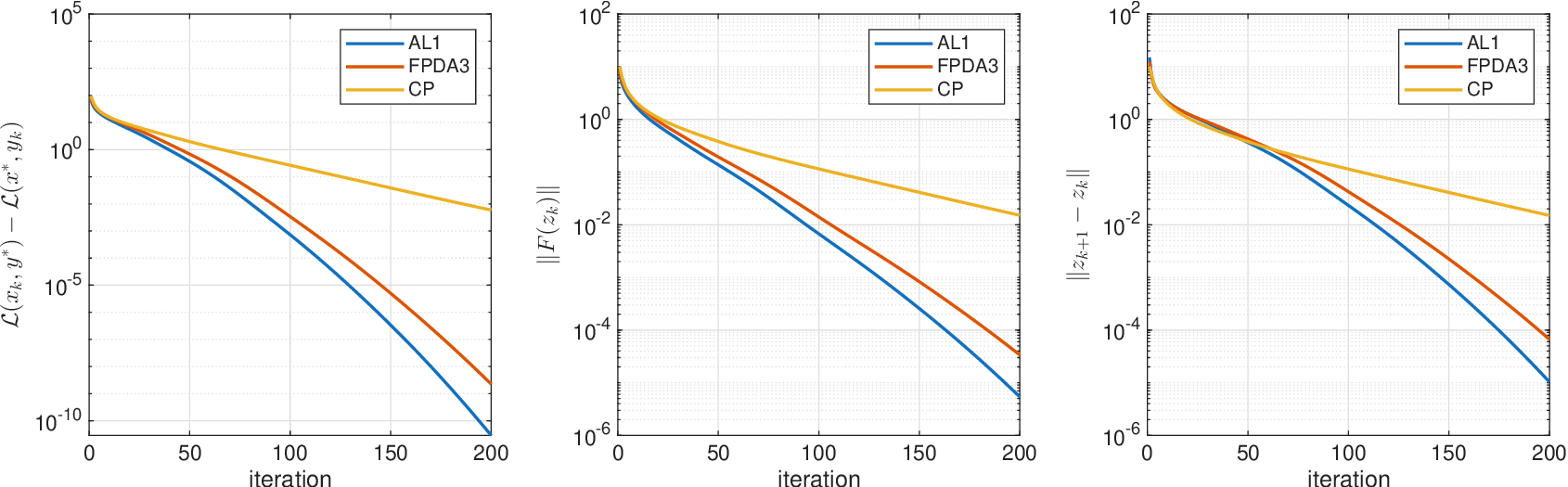}
    \caption{Comparison of the primal-dual gap, stationarity residual, and increment for  problem \eqref{eq:num-rank-saddle}.}
    \label{fig11}
\end{figure}

\begin{figure}[htbp]
    \centering
    \includegraphics[width=0.95\linewidth]{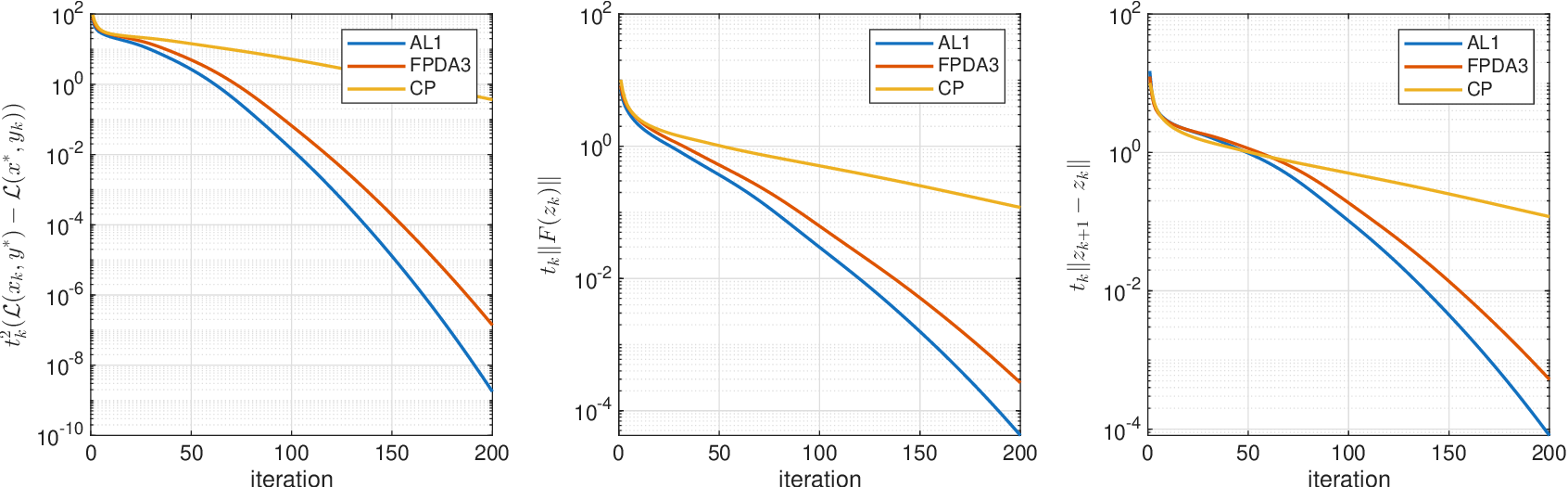}
    \caption{Scaled quantities corresponding to the refined little-$o$ estimates.}
    \label{fig12}
\end{figure}

As shown in \Cref{fig11}, AL1 gives stable decay of the primal-dual gap, the stationarity residual, and the increment, and is competitive with FPDA3 and the CP method. The scaled curves in \Cref{fig12} further support the little-$o$ behavior predicted by the theory.  

\subsection{Image deblurring} 

We next consider an image deblurring problem with quadratic gradient regularization. Let $x\in\R^n$ denote the unknown image and let $b\in\R^n$ be the blurred and noisy observation
\[
    b=Ax_{\rm true}+\varepsilon,
\]
where $A$ is a blurring operator, $x_{\rm true}$ is the original image, and $\varepsilon$ is additive noise. We consider
\[
    \min_{x\in\R^n}
    \frac12\|Ax-b\|^2+\frac{1}{2\mu}\|\mathcal Kx\|^2,
\]
where $\mathcal K$ is the discrete image-gradient operator and $\mu>0$. Using the identity
\[
    \frac{1}{2\mu}\|\mathcal Kx\|^2
    =
    \max_y\left\{
        \langle \mathcal Kx,y\rangle-\frac{\mu}{2}\|y\|^2
    \right\},
\]
the problem can be written as the bilinear saddle point problem
\begin{equation}\label{eq:num-saddle}
    \min_{x\in\R^n}\max_{y\in\R^m}
    \ \cL(x,y)
    :=
    \frac12\|Ax-b\|^2+\langle \mathcal Kx,y\rangle
    -\frac{\mu}{2}\|y\|^2.
\end{equation}
Thus $f(x)=\frac12\|Ax-b\|^2$, $K=\mathcal K$, and $g(y)=\frac{\mu}{2}\|y\|^2$. In this formulation, $g$ is strongly convex with parameter $\mu$, while $f$ may be ill-conditioned or rank deficient because of the blur operator. Hence \eqref{eq:num-saddle} is a partially strongly convex saddle problem.

For a two-dimensional image, the periodic discrete gradient operator is
\[
    \mathcal Kx=
    \begin{pmatrix}
        \mathcal K_hx\\
        \mathcal K_vx
    \end{pmatrix},
    \qquad
    (\mathcal K_hx)_{i,j}=x_{i+1,j}-x_{i,j},
    \qquad
    (\mathcal K_vx)_{i,j}=x_{i,j+1}-x_{i,j}.
\]
The adjoint operator $\mathcal K^\top$ is the corresponding negative discrete divergence. The reference saddle point is computed from the KKT system and is used to evaluate the primal-dual gap and the stationarity residual.

In the experiment, $A$ is chosen as a periodic Gaussian blur operator with kernel size $9$ and standard deviation $1.6$. The observation is corrupted by additive Gaussian noise with noise level $0.05$, and we take $\mu=1$. We compare AL1 with FPDA3 \cite{DingCOAP}, the accelerated Chambolle-Pock method \cite{ChambolleJMIV}, denoted by ACP, and the inertial accelerated primal-dual algorithm in \cite{HeCNSNS}, denoted by IAPDA. All methods are initialized from the same point.

\begin{figure}[htbp]
    \centering
    \includegraphics[width=0.95\linewidth]{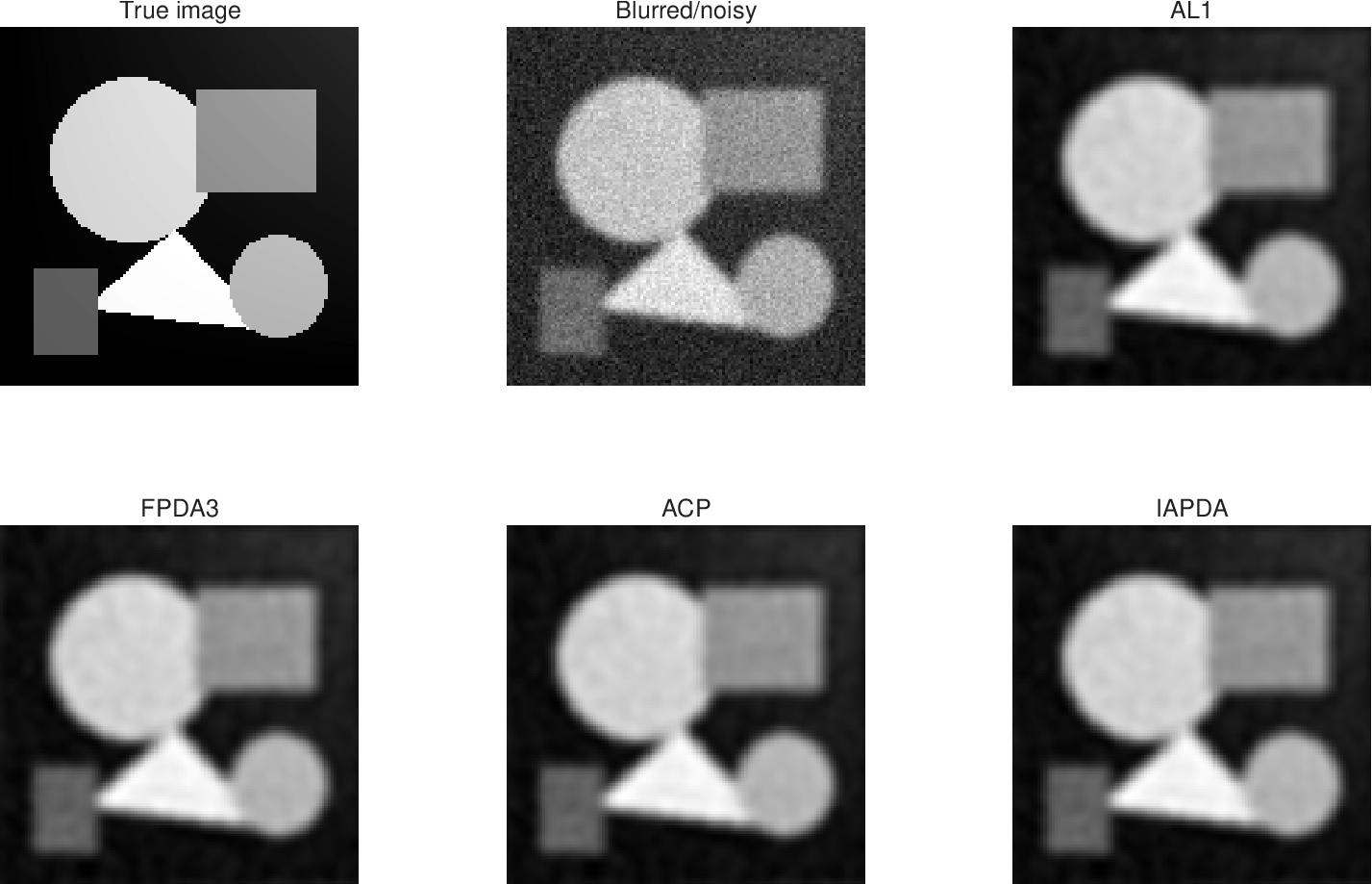}
    \caption{Image deblurring results. The six panels show the original image, the blurred and noisy observation, and the reconstructions obtained by AL1, FPDA3, ACP, and IAPDA.}
    \label{fig:recons}
\end{figure}

\begin{figure}[htbp]
    \centering
    \includegraphics[width=0.95\linewidth]{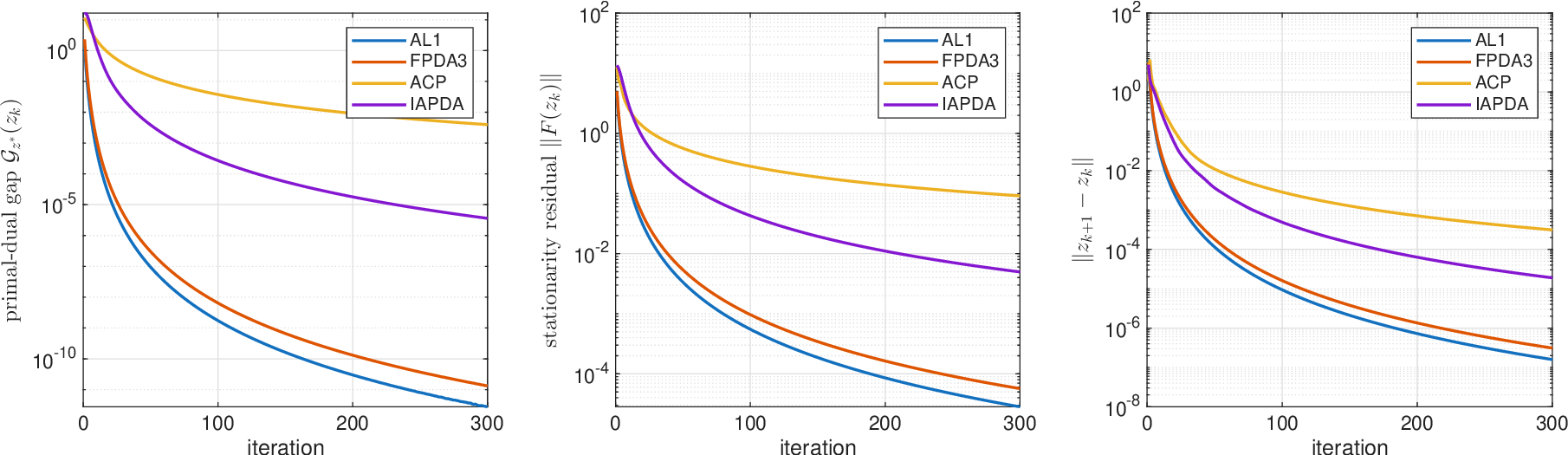}
    \caption{Comparison of the primal-dual gap, stationarity residual, and $\|z_{k+1}-z_k\|$ for the image deblurring problem.}
    \label{fig:conv}
\end{figure}

As shown in \Cref{fig:recons}, all four methods recover the main structures of the image from the blurred and noisy data. The convergence curves in \Cref{fig:conv} show that AL1 decreases the primal-dual gap and the stationarity residual faster in this test. These observations are consistent with the convergence results proved in this paper and indicate that the corrected bilinear coupling is also effective for the partially strongly convex image deblurring model.

\section{Conclusion}\label{sec:Conclusion}

In this paper, we studied Nesterov-type acceleration for bilinear convex-concave saddle point problems from both continuous and discrete viewpoints. For the continuous-time model, we established a Lyapunov estimate that yields the accelerated saddle-gap rate $\bigO(1/t^{2})$ and the velocity estimate $\bigO(1/t)$. We further proved convergence of the whole trajectory to a saddle point and obtained improved little-$o$ rates for the primal-dual gap, the velocity, and, under the smoothness assumption, the stationarity residual in the noncritical case. These results extend the improved asymptotic theory for Nesterov-type dynamics in convex minimization \cite{AttouchMp,MayTJM,AttouchSIOPT,Jang} and linearly constrained convex optimization \cite{BotJDE,HeTc,ZengTAC,HePDarxiv} to the bilinear saddle setting. On the discrete side, we derived a fast primal-dual algorithm through a structure-preserving discretization of the continuous dynamic. The resulting scheme preserves the skew-symmetric cancellation mechanism in the Lyapunov analysis and achieves the nonergodic rate $\bigO(1/t_k^{2})$ for the primal-dual gap. We also proved convergence of the generated sequence. When $\rho<\eta<1$, we further obtained the improved estimates $o(1/t_k^{2})$ for the gap and $o(1/t_k)$ for both the increment and the stationarity residual.
Compared with classical primal-dual splitting methods \cite{ChambolleJMIV}, the proposed method provides an accelerated nonergodic convergence guarantee. Compared with partially strongly convex accelerated methods \cite{CondatPre2026,HeCNSNS}, it does not rely on strong convexity. Compared with recent accelerated bilinear saddle algorithms \cite{DingCOAP}, it further establishes sequence convergence in the critical parameter regime and little-$o$ estimates in the noncritical regime. Numerical experiments on image deblurring support the theoretical results and show favorable decay of the primal-dual gap and the stationarity residual.

\end{document}